\date{}
\begin{document}
\def \Z{\Bbb Z}
\def \C{\Bbb C}
\def \R{\Bbb R}
\def \Q{\Bbb Q}
\def \N{\Bbb N}
\def \tr{{\rm tr}}
\def \span{{\rm span}}
\def \Res{{\rm Res}}
\def \End{{\rm End}}
\def \E{{\rm End}}
\def \Ind {{\rm Ind}}
\def \Irr {{\rm Irr}}
\def \Aut{{\rm Aut}}
\def \Hom{{\rm Hom}}
\def \mod{{\rm mod}}
\def \ann{{\rm Ann}}
\def \<{\langle}
\def \>{\rangle}
\def \t{\tau }
\def \a{\alpha }
\def \e{\epsilon }
\def \l{\lambda }
\def \L{\Lambda }

\def \b{\beta }
\def \om{\omega }
\def \o{\omega }
\def \c{\chi}
\def \ch{\chi}
\def \cg{\chi_g}
\def \ag{\alpha_g}
\def \ah{\alpha_h}
\def \ph{\psi_h}
\newcommand{\bea}{\begin{eqnarray}}
\newcommand{\eea}{\end{eqnarray}}
\newcommand{\Zp}{{\mathbb Z} _{> 0} }

\def \be{\begin{equation}\label}
\def \ee{\end{equation}}
\def \bl{\begin{lem}\label}
\def \el{\end{lem}}
\def \bt{\begin{thm}\label}
\def \et{\end{thm}}
\def \bp{\begin{prop}\label}
\def \ep{\end{prop}}
\def \br{\begin{rem}\label}
\def \er{\end{rem}}
\def \bc{\begin{coro}\label}
\def \ec{\end{coro}}
\def \bd{\begin{de}\label}
\def \ed{\end{de}}
\def \pf{{\bf Proof. }}
\def \voa{{vertex operator algebra}}

\newtheorem{thm}{Theorem}[section]
\newtheorem{prop}[thm]{Proposition}
\newtheorem{coro}[thm]{Corollary}
\newtheorem{conj}[thm]{Conjecture}
\newtheorem{lem}[thm]{Lemma}
\newtheorem{rem}[thm]{Remark}
\newtheorem{example}[thm]{Example}
\newtheorem{de}[thm]{Definition}
\newtheorem{hy}[thm]{Hypothesis}
\makeatletter \@addtoreset{equation}{section}
\def\theequation{\thesection.\arabic{equation}}
\makeatother \makeatletter

\newcommand{\g}{\frak g}
    \newcommand{\nno}{\nonumber}
    \newcommand{\lbar}{\bigg\vert}
    \newcommand{\p}{\partial}
    \newcommand{\dps}{\displaystyle}
    \newcommand{\bra}{\langle}
    \newcommand{\ket}{\rangle}
 \newcommand{\res}{\mbox{\rm Res}}
\renewcommand{\hom}{\mbox{\rm Hom}}
  \newcommand{\epf}{\hspace{2em}$\Box$}
 \newcommand{\epfv}{\hspace{1em}$\Box$\vspace{1em}}
\newcommand{\nord}{\mbox{\scriptsize ${\circ\atop\circ}$}}
\newcommand{\wt}{\mbox{\rm wt}\ }

\title[]{Some general results on conformal embeddings of affine vertex operator algebras}

\subjclass[2000]{ Primary 17B69, Secondary 17B67, 81R10}
\author{Dra\v{z}en  Adamovi\' c and Ozren Per\v{s}e }
\address{Faculty of Science - Department of Mathematics, University of Zagreb, Croatia}
\email{adamovic@math.hr; perse@math.hr} \maketitle

\begin{abstract}
We give a general criterion for conformal embeddings of
vertex operator algebras associated to affine Lie algebras at arbitrary levels. Using that criterion, we construct new
conformal embeddings at admissible rational and negative integer
levels. In particular, we construct all remaining conformal
embeddings associated to automorphisms of Dynkin diagrams of simple
Lie algebras. The semisimplicity of the corresponding decompositions
is obtained by  using the concept of  fusion rules for vertex
operator algebras.
\end{abstract}

\section{Introduction}

Let $U$ and $V$ be vertex operator algebras of affine type. We say that $U$ is conformally  embedded into $V$ if $U$ can be
realized as a vertex subalgebra of $V$ with the same Sugawara Virasoro vector.
 The most interesting case of conformal embedding is when $V$ is a finite direct sum of irreducible $U$--modules.
 These conformal embeddings are studied in various  aspects of conformal field theory
 (cf. \cite{AGO},
\cite{BB}, \cite{BEK}, \cite{EG},\cite{Fuchs}, \cite{SW}), the theory of tensor categories (cf. \cite{FFRS}, \cite{KO})
and in the representation theory of affine Kac-Moody Lie algebras (cf. \cite{CKPP}).
The  construction and classification of conformal embeddings have mostly been studied for simple affine vertex operator algebras of positive levels.
 Some examples of conformal embeddings at rational admissible levels were constructed in \cite{P2} and \cite{P4}.

 In the present paper
 we give both necessary and sufficient conditions for
conformal embeddings at general levels, within the framework of
vertex operator algebra theory. In this way we will be able to construct new examples of conformal embeddings at rational and negative integer levels. Let us explain our results in more
details.

Let $\g$ be a simple finite-dimensional Lie algebra and $\g_0$ its
subalgebra which is a reductive Lie algebra. Let $N_{\g}(k,0)$ be
the universal affine vertex operator algebra of level $k \neq
-h^{\vee}$ associated to $\g$, and $L_{\g}(k,0)$ its simple
quotient. Then $\g_0$ generates a subalgebra $U_{\g_0}$ (resp.
$\widetilde{U}_{\g_0}$) of $N_{\g}(k,0)$ (resp. $L_{\g}(k,0)$). Let
$\omega_{\g_0}$ be the Virasoro vector in $U_{\g_0}$ which is a sum
of usual Virasoro vectors obtained by using Sugawara construction.
Let $ \overline{L}(z) = \sum_{n \in \Z } \overline{L}(n) z ^{-n-2}$
be the associated Virasoro field. We prove the following result:

\begin{thm} \label{thm-intro}
Assume that $\g = \g_0 \oplus W$  such that $W$ is a $\g_0$--module orthogonal to $\g_0$.
Then $\widetilde{U}_{\g_0}$ is conformally embedded into
$L_{\g}(k,0)$ if and only if the following condition holds
\bea \overline{L}(0) x(-1){\bf 1} = x(-1) {\bf 1} \qquad (x \in W).
\label{eqn-intro}\eea
  Moreover, the condition (\ref{eqn-intro}) implies that $N_{\g}(k,0)$ contains a singular vector of
conformal weight $2$.
\end{thm}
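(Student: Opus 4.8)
The plan is to exhibit the singular vector explicitly as the \emph{coset Virasoro vector} $\omega' = \omega_{\g} - \omega_{\g_0} \in N_{\g}(k,0)$, where $\omega_{\g}$ denotes the full Sugawara conformal vector of $N_{\g}(k,0)$. Since $W \neq 0$, the $W$--part $\sum_{x} x(-1)x(-1){\bf 1}$ occurs in $\omega_{\g}$ but not in $\omega_{\g_0}$, so $\omega'$ is a nonzero vector of full conformal weight $2$; it then remains to check that it is a highest weight vector for $\widehat{\g}$, i.e.\ that $x(n)\omega' = 0$ for all $x \in \g$ and all $n \geq 1$. By the weight grading only $n = 1$ and $n = 2$ can give a nonzero result.

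For $n = 2$ I would compute $x(2)\omega_{\g}$ and $x(2)\omega_{\g_0}$ directly from the Sugawara expressions. Each lands in the weight-$0$ space $\C{\bf 1}$, and after commuting $x(2)$ through the two currents the only surviving contribution is a central (level) term proportional to $\langle x, \sum_a [u_a, u^a]\rangle$, where $u_a, u^a$ run over dual bases of $\g$ (resp.\ of each simple or abelian component of $\g_0$). Since the Casimir tensor $\sum_a u_a \otimes u^a$ is symmetric, $\sum_a [u_a, u^a] = 0$, whence $x(2)\omega_{\g} = x(2)\omega_{\g_0} = 0$, and so $x(2)\omega' = 0$ for every $x \in \g$.

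The key step is $n = 1$, where I would avoid a direct Casimir computation by invoking skew-symmetry $a_{(1)}b = \sum_{j \geq 0}\frac{(-1)^{j}}{j!}L(-1)^{j}\,(b_{(1+j)}a)$ with $a = \omega_{\g}$ (resp.\ $a = \omega_{\g_0}$) and $b = x(-1){\bf 1}$. Using $x(1+j)\omega = 0$ for $j \geq 1$ (by the weight grading for $j \geq 2$ and by the $n=2$ computation for $j = 1$), the sum collapses to its $j = 0$ term, yielding
\be{skew}
 x(1)\omega_{\g} = L(0)\, x(-1){\bf 1}, \qquad x(1)\omega_{\g_0} = \overline{L}(0)\, x(-1){\bf 1},
\ee
since $\omega_{\g,(1)} = L(0)$ and $\omega_{\g_0,(1)} = \overline{L}(0)$. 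As $x(-1){\bf 1}$ has full conformal weight $1$, the first identity gives $x(1)\omega_{\g} = x(-1){\bf 1}$ for all $x \in \g$. For the second: if $x \in \g_0$ then $x(-1){\bf 1}$ is a $\g_0$--current of $\overline{L}(0)$--weight $1$, so $\overline{L}(0)x(-1){\bf 1} = x(-1){\bf 1}$ automatically; if $x \in W$ this equality is exactly the hypothesis (\ref{eqn-intro}). Hence $x(1)\omega_{\g_0} = x(-1){\bf 1}$ for all $x \in \g$ as well, so $x(1)\omega' = x(1)\omega_{\g} - x(1)\omega_{\g_0} = 0$. Together with the vanishing at $n = 2$, this shows $\omega'$ is a singular vector of conformal weight $2$ in $N_{\g}(k,0)$.

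The main obstacle I anticipate is the bookkeeping behind (\ref{skew}): one must justify the collapse of the skew-symmetry sum (which is precisely where the $n=2$ computation and the weight grading enter) and handle the reductive — rather than simple — structure of $\g_0$, summing the Sugawara contributions over all simple and abelian components and using the orthogonality $W \perp \g_0$ to discard the cross-level terms $\langle x, u_a\rangle$ with $x \in W$, $u_a \in \g_0$. The assumption that $W$ is a $\g_0$--module, so that $[\g_0, W] \subseteq W$, guarantees that all intermediate expressions remain in the relevant graded pieces.
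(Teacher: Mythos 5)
Your computation follows essentially the same route as the paper's proof of Theorem \ref{konf-1}: both arguments rest on showing that the coset vector $\omega' = \omega_{\g} - \omega_{\g_0}$ is annihilated by the positive modes $x(n)$, $n \geq 1$, with the decisive identity $x(1)\omega_{\g_0} = \overline{L}(0)\,x(-1){\bf 1}$, which equals $x(-1){\bf 1}$ automatically for $x \in \g_0$ and by hypothesis (\ref{eqn-intro}) for $x \in W$. You differ only in how you reach that identity: the paper computes $x(1)\omega_{\g_0}$ directly from the Sugawara expression, using $W \perp \g_0$ to discard level terms and recognizing $\sum_{i,j}\tfrac{1}{2(k_i + h_{0,i}^{\vee})}[[x,u^{i,j}],v^{i,j}](-1){\bf 1}$ as the Casimir action $\overline{L}(0)x(-1){\bf 1}$, whereas you derive it from skew-symmetry; you also make the $n=2$ vanishing explicit (via symmetry of the Casimir tensor), while the paper silently reduces everything to $n=1$ --- legitimate, since $\g(2) = [\g(1),\g(1)]$ for $\g$ simple, so $x(1)$-annihilation for all $x$ already forces annihilation by all higher modes. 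Your explicit verification that $\omega' \neq 0$ (the $W$-quadratic part of $\omega_{\g}$ survives) is a point the paper leaves implicit.

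There are, however, two loose ends. First, describing $\omega'$ as ``a highest weight vector for $\widehat\g$, i.e.\ $x(n)\omega' = 0$ for all $n \geq 1$'' is inaccurate: a singular vector must also be annihilated by the degree-zero positive root vectors, and in general $x(0)\omega' \neq 0$ for $x \in W$ (already for $\g = \frak{sl}_2$ with $\g_0$ its Cartan subalgebra one has $e(0)\omega_{\g_0} \neq 0$). What you actually proved --- annihilation by $\g \otimes t\,\C[t]$ --- is exactly what is needed: the submodule $U(\widehat\g)\omega'$ then has zero weight-$0$ component, hence is proper, so $\omega' \in N^{1}_{\g}(k,0)$; a genuine singular vector of conformal weight $2$ is then obtained as a $\g$-highest weight vector inside the finite-dimensional $\g$-module $U(\g)\omega'$, a short supplement your writeup needs. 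Second, your argument stops at the singularity of $\omega'$ and never closes the biconditional that the theorem asserts: you should add that $\omega' \in N^{1}_{\g}(k,0)$ forces $\omega_{\g} = \omega_{\g_0}$ in $L_{\g}(k,0)$, which is the conformal embedding (the ``if'' direction), and record the one-line ``only if'' direction --- if the Virasoro vectors coincide in $L_{\g}(k,0)$ then $\overline{L}(0) = L(0)$ there, and $L(0)x(-1){\bf 1} = x(-1){\bf 1}$ with $x(-1){\bf 1} \neq 0$ in $L_{\g}(k,0)$, which is precisely (\ref{eqn-intro}). With these standard additions your proof is complete and coincides in substance with the paper's.
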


If $k$ is a positive integer, then  $N_{\g}(k,0)$ has a singular
vector of conformal weight $2$ if and only if $k=1$. So we only have
conformal  embeddings into $L_{\g}(1,0)$. But in general we have
such singular vectors for other values of $k$ (cf. Remark
\ref{sing-napomena}).

Next we apply Theorem  \ref{thm-intro} in the case when $\g_0$ is a simple Lie algebra,
 and
$$\g =\g_0 \oplus V_{\g_0}(\mu_1) \oplus \cdots \oplus V_{\g_0}(\mu_s),$$
where $V_{\g_0}(\mu_i)$ is irreducible finite-dimensional
$\g_0$-module with highest weight $\mu_i$, and $V_{\g_0}(\mu_i)$
($i=1, \ldots ,s$) is orthogonal to $\g_0$ with respect to the
invariant bilinear form on $\g$. Denote by $ \widetilde{L}_{\g_0}
(k' , 0)$ the subalgebra of $L_{\g}(k,0)$ generated by $\g_0$, where
$k' = a k$ and $a \in \N$  is the Dynkin index of  the embedding $\g_0 < \g$. Note
that in the case of general levels, vertex algebra $
\widetilde{L}_{\g_0}  (k' , 0)$ is not necessarily simple.

Theorem \ref{thm-intro} then implies that the key condition for the
conformal embedding is that the eigenvalue of the Casimir operator
of $\g_0$ is the same for all $V_{\g_0}(\mu_i)$, $i=1, \ldots ,s$.
More precisely, if
\bea && ( \mu_ i,  \mu_ i + 2  \rho _0 )_0   =  (\mu_j, \mu_j + 2
\rho_0 )_0 \qquad \mbox{ for all }  i,j=1, \ldots ,s, \nonumber \eea
where $( \cdot, \cdot)_0$ denotes the suitably normalized invariant
bilinear form on $\g_0$, then for $k,k'$ such that
\bea && \frac{ ( \mu_i , \mu _i + 2 \rho_0 ) _0}{ 2 (k ' + h _0
^{\vee} )} = 1, \quad (i=1, \ldots, s), \nonumber \eea
vertex operator algebra $ \widetilde{L}_{\g_0}  (k' , 0)$ is
conformally embedded into $L_{\g}(k , 0)$.

In order to obtain the conformal embedding of simple vertex operator
algebras $L_{\g_0}(k',0) < L_{\g}(k,0)$ and complete reducibility of
$L_{\g}(k,0)$ as a $L_{\g_0}(k',0)$--module, we additionally assume
that $\g_0$ is obtained as a fixed point subalgebra of an
automorphism of $\g$ of finite order. Then this automorphism
naturally acts on $L_{\g} (k,0)$, so one can use the results  from orbifold theory \cite{DM}. Under certain natural conditions we get
$$ L_{\g}(k,0) = L_{\g_0}(k',0) \oplus L_{\g_0}(k',\mu_1) \oplus \cdots \oplus L_{\g_0}(k',\mu_s).$$
These conditions involve some information about fusion rules of
$L_{\g_0}(k',0)$--modules (cf. Theorem \ref{general}).

Our methods enable us to construct all conformal embeddings
associated to automorphisms of Dynkin diagrams. Let us illustrate
this result by the following table:
\begin{enumerate}
\item[{}]
 Table A, \quad ${\g}$, $\g_0$ are  simple Lie
algebras, $\g_0 < \g$.
$$
\begin{array}{| c | c | c|  c | c | }
\hline
{\g} & {\g}_0& \mbox{decomposition of} \ L_{\g}(k,0)& k&
k'    \\ \hline
A_{2\ell-1} & C_{\ell} & L_{\g_0}(k',0) \oplus L_{\g_0} ( k',\omega_2) & -1 & -1 \\
\hline
A_{2\ell}  & B_{\ell} & L_{\g_0}(k',0) \oplus L_{\g_0} (k', 2 \omega_1) & 1 & 2 \\
\hline
A_{2}  & A_{1} & L_{\g_0}(k',0) \oplus L_{\g_0} (k', 4 \omega_1) & 1 & 4 \\
\hline
D_{\ell} & B_{\ell -1} & L_{\g_0}(k',0)  \oplus L_{\g_0} (k',\omega_1) & -\ell + 2 & -\ell + 2\\
\hline
E_{6} & F_{4} & L_{\g_0}(k',0)  \oplus L_{\g_0} (k',\omega_4) & -3 & -3  \\
\hline
D_{4} & G_{2} & L_{\g_0}(k',0) \oplus L_{\g_0} (k',\omega_2) \oplus & -2 & -2 \\
 &  \ & \ L^{(1)} _{\g_0}(k',\omega_1) \oplus L^{(2)} _{\g_0}(k',\omega_1) & \  & \\
\hline
\end{array}
$$
\end{enumerate}

In order to describe all conformal embeddings  for such
automorphisms, we include into this table some well-known conformal
embeddings at positive integer levels in the cases $(A_{2 \ell},
B_{\ell})$, $(A_2, A_1)$ (cf. \cite{Fuchs}; see also \cite{AP-2010},
\cite{W}) and the conformal embedding for pair $(A_{2\ell-1},
C_{\ell})$ from \cite{AP}. But other conformal embeddings at
negative integer levels are new.

We also apply our methods to affine vertex operator algebras at
admissible levels (cf. \cite{A-1994}, \cite{AM}, \cite{KW1},
\cite{KW2}, \cite{P1}, \cite{P2}, \cite{P3}, \cite{W}) and get new
conformal embedding of $L_{A_2}(-5/3,0)$ into $L_{G_2}(-5/3,0)$ with
decomposition
$$L_{G_2}( -5/3,0) = L_{A_2}(-5/3,0) \oplus L_{A_2}(-5/3,\omega_1) \oplus L_{A_2}(-5/3, \omega_2).$$
We also determine the decomposition for the conformal embedding of
$L_{D_{\ell}} (-\ell + 3/2,0)$ into $L_{B_{\ell}}(-\ell+3/2,0)$ from
\cite{P4}.

We should also mention that our methods give an uniform proof for a
family of conformal embeddings which does not require explicit
realizations of affine Lie algebras nor explicit formulas for
singular vectors. In most cases one can check all conditions for
conformal embeddings only by using tensor product decompositions of
finite-dimensional modules of simple Lie algebras.

The notion of conformal embedding is also closely related to the
notion of extension of vertex operator algebra. We note that simple
current extensions of affine vertex operator algebras at positive
integer levels were studied in \cite{DLM}, \cite{L-sc}.

We assume that the reader is familiar with the notion of vertex
operator algebra (cf. \cite{B}, \cite{FHL}, \cite{FLM}).

\section{ A general criterion for conformal embeddings}
\label{kriterij}

In this section we shall derive a general criterion for the
conformal embedding of affine vertex operator algebras associated to
a pair of Lie algebras $(\g, \g_0)$, where $\g$ is a simple Lie
algebra and $\g_0$ its reductive subalgebra.

Let $\g$ be a simple complex finite-dimensional Lie algebra with a
non-degenerate invariant symmetric bilinear form $(\cdot, \cdot)$.
We can normalize the form on $\g$ such that $(\theta, \theta) = 2$,
where $\theta$ is the highest root of $\g$. Let  $\rho$ be the sum
of all fundamental weights for $\g$  and let $h ^{\vee}$ be the dual
Coxeter number for $\g$.

Assume that  $\g_0$ is a Lie subalgebra of
$\g$ such that
\begin{itemize}

\item[(1)] bilinear form $( \cdot, \cdot)$ on $\g_0$ is   non-degenerate  and
$\g = \g_0 \oplus W$ is an orthogonal sum of $\g_0$ and  a
$\g_0$--module $W$;

\item[(2)] $\g_0 = \oplus_{i=0} ^n  \g_{0,i}$ is an orthogonal sum of commutative subalgebra $\g_{0,0}$ and simple Lie algebras
$\g_{0,1}, \ldots , \g_{0,n}$ (i.e., $\g_0$ is a reductive Lie
algebra).
\end{itemize}

Let  $(\cdot, \cdot)_{\g_{0,i}}$ be the non-degenerate bilinear form
on $\g_{0,i}$ normalized such that $(\theta_{0,i}, \theta_{0,i}
)_{\g_{0,i}} =2$, where $\theta_{0,i}$ is the highest root of
$\g_{0,i}$.  One can show that
$$ a_i \ (\cdot, \cdot)_{\g_{0,i} } =  (\cdot, \cdot) \vert \g_{0,i} \times \g_{0,i}\quad  \mbox{for certain} \ a_i \in \N. $$
(For $i=0$, we can take $a_0 =1$).

The $(n+1)$--tuple $(a_0,a_1, \cdots, a_n)$ is called the Dynkin multi-index of embedding $\g_0$ into $\g$.
Let $\{ u ^{i,j} \}_j$, $\{ v ^{i,j} \}_j$ be a pair of dual bases
for  $\g_{0,i}$ such that $ ( u ^{i,j}, v ^{i,k} )_{\g_{0,i}} =
\delta_{j,k}$. Let  $h_{0,i} ^{\vee}$  be the dual Coxeter number
for  $\g_{0,i}$.  (For $i = 0$ we take $h_{0,0} ^{\vee} = 0$).

Let $N_{\g} (k ,0)$ be the universal affine vertex operator algebra
of level $k \neq -h^{\vee}$ associated to $\g$ (cf. \cite{FrB},
\cite{FZ}, \cite{K2}, \cite{LL}, \cite{L}). Let $N_{\g}  ^{1} (k ,
0)$ be the maximal ideal  of   $N_{\g} (k , 0)$, and  $L_{\g} (k ,
0) =  N_{\g} (k , 0) / N_{\g} ^1 (k , 0)$ the corresponding simple
vertex operator algebra.

Then $\g_{0,i}$ generates a vertex subalgebra of $N_{\g}(k,0)$  which is isomorphic to $N_{\g_{0,i}}(k_i,0)$ for $k_i = a_i k$.
Moreover, $\g_0$ generates a subalgebra of $N_{\g}(k,0)$ which is
isomorphic to
$$  U_{\g_0}= \otimes_{i=0} ^n N_{\g_{0,i}} (k_i,0).$$

Let $\widetilde{U}_{\g_0}$ be the subalgebra of $L_{\g}(k,0)$ generated by $\g_0$. Then $\widetilde{U}_{\g_0}$ is isomorphic to
$$ \widetilde{U}_{\g_0}= \otimes_{i=0} ^n \widetilde{L}_{\g_{0,i}} (k_i,0),$$
where $\widetilde{L}_{\g_{0,i}}(k_i,0)$ is a certain quotient of $N_{\g_{0,i}}(k_i,0)$.

\begin{de}
We say that the vertex operator algebra $\widetilde{U}_{\g_0}$ is
conformally embedded into $L_{\g}(k,0)$, if $\widetilde{U}_{\g_0}$
is a vertex subalgebra of $L_{\g}(k,0)$ with the same Virasoro
vector.
\end{de}

Let $\omega_{\g}$ be the usual  Sugawara Virasoro vector in $N_{\g}(k,0)$ and let
$$\omega_{\g_{0,i}} = \frac{1}{2 (k_i + h_{0,i} ^{\vee} )}\sum_{j = 1} ^{\dim \g_{0,i} } u^{i,j} (-1) v ^{i,j} (-1) {\bf 1} $$ be the Sugawara Virasoro vector in $N_{\g_{0,i} }(k_i, 0)$ $(i=0, \dots, n)$.

Then
$$\omega_{\g_0} = \omega_{\g_{0,0}} + \cdots + \omega_{\g_{0,n}}$$
is the Virasoro vector in $U_{\g_0}$.

As usual we identify $x \in \g$ with $x(-1) {\bf 1} \in
N_{\g}(k,0)$. Therefore, $W$ can be considered as a subspace of
$N_{\g}(k,0)$.

Let
$$\overline{L} (z) = \sum_{n \in \Z} \overline{L}(n) z ^{-n-2}= Y(\omega_{\g_0},z). $$

\begin{thm} \label{konf-1}
Assume that
\bea \label{cond-1} && \overline{L}(0) v = v \quad \mbox{for every } \ v \in W. \eea
Then
$$ \omega_{\g} - \omega_{\g_0} \in N^1 _{\g} (k,0). $$
In particular, $\widetilde{U}_{\g_0}$ is conformally embedded into $L_{\g}(k,0)$ if and only if the condition (\ref{cond-1}) holds.
\end{thm}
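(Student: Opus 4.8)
The plan is to prove the substantive implication---that (\ref{cond-1}) forces the weight-two vector $\widetilde{\omega} := \omega_{\g} - \omega_{\g_0}$ into the maximal ideal $N^1_{\g}(k,0)$---and to deduce the stated equivalence almost formally. Since $N_{\g}(k,0)$ is generated by ${\bf 1}$ and its maximal ideal is its unique maximal proper $\widehat{\g}$--submodule, it suffices to show that under (\ref{cond-1}) the vector $\widetilde{\omega}$ is singular, i.e. $x(n)\widetilde{\omega} = 0$ for all $x \in \g$ and all $n \geq 1$. Indeed, once this holds the submodule generated by $\widetilde{\omega}$ is spanned by vectors of conformal weight $\geq 2$ (the modes $x(0)$ preserve weight and the negative modes raise it), so it cannot contain ${\bf 1}$ and is therefore proper; hence $\widetilde{\omega} \in N^1_{\g}(k,0)$.

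First I would treat $x \in \g_0$. Here both $\omega_{\g}$ and $\omega_{\g_0}$ are Sugawara vectors for which $x$ is a weight-one primary field, so a standard mode computation gives $x(1)\omega_{\g} = x = x(1)\omega_{\g_0}$ and $x(n)\omega_{\g} = 0 = x(n)\omega_{\g_0}$ for $n \geq 2$; subtracting yields $x(n)\widetilde{\omega} = 0$ for $n \geq 1$. The real content is the case $x \in W$. A direct calculation with the bracket relations of $\widehat{\g}$ gives $x(1)\omega_{\g} = x$ and $x(n)\omega_{\g} = 0$ for $n \geq 2$, exactly as before. For $\omega_{\g_0} = \sum_i \omega_{\g_{0,i}}$ I would expand each $\omega_{\g_{0,i}}$ through the dual bases $\{u^{i,j}\}$, $\{v^{i,j}\}$ and commute $x(n)$ inward. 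At every step the orthogonality $W \perp \g_0$ makes the computation collapse: it kills the central terms $(x,u^{i,j})\,k$ and $(x,v^{i,j})\,k$ and the cross-term $x(n)v^{i,j}(-1){\bf 1}$, leaving $x(n)\omega_{\g_0} = 0$ for $n \geq 2$ and
$$ x(1)\omega_{\g_0} = \sum_{i} \frac{1}{2(k_i + h_{0,i}^{\vee})} \Big( \sum_{j} [[x, u^{i,j}], v^{i,j}] \Big)(-1){\bf 1}. $$
I would then recognize the inner sum as the Casimir of $\g_{0,i}$ acting on $x \in W$, and note that the same expression arises when one computes $\overline{L}(0)x$ directly from the Sugawara form of $\omega_{\g_0}$: here the level-dependent contribution present for $x \in \g_{0,i}$ disappears precisely because $x \in W$ is orthogonal to $\g_0$. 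Hence $x(1)\omega_{\g_0} = \overline{L}(0)x$, so $x(1)\widetilde{\omega} = x - \overline{L}(0)x$, and hypothesis (\ref{cond-1}) gives $x(1)\widetilde{\omega} = 0$. This completes the verification that $\widetilde{\omega}$ is singular, and with it the inclusion $\omega_{\g}-\omega_{\g_0} \in N^1_{\g}(k,0)$.

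Finally I would settle the equivalence. Passing to the quotient, (\ref{cond-1}) gives $\omega_{\g} \equiv \omega_{\g_0} \pmod{N^1_{\g}(k,0)}$, so in $L_{\g}(k,0)$ the Virasoro vector of $\widetilde{U}_{\g_0}$ (the image of $\omega_{\g_0}$) coincides with that of $L_{\g}(k,0)$ (the image of $\omega_{\g}$), which is exactly conformal embedding. Conversely, if $\widetilde{U}_{\g_0}$ is conformally embedded then $\omega_{\g}-\omega_{\g_0} \in N^1_{\g}(k,0)$; writing $L(0)$ for the zero mode of $Y(\omega_{\g},z)$ we have $L(0)v = v$ for $v \in W \subset \g$, while the vanishing of the image of $\omega_{\g}-\omega_{\g_0}$ in $L_{\g}(k,0)$ makes its weight-preserving mode $L(0)-\overline{L}(0)$ annihilate $v$ there. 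Since the quotient map $N_{\g}(k,0) \to L_{\g}(k,0)$ is injective on the weight-one subspace (which equals $\g$ in both), this already holds in $N_{\g}(k,0)$, whence $\overline{L}(0)v = v$, i.e. (\ref{cond-1}).

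I expect the main obstacle to be the bookkeeping in the $x \in W$ computation: tracking the normalizations $(\cdot,\cdot)|_{\g_{0,i}} = a_i(\cdot,\cdot)_{\g_{0,i}}$ and the levels $k_i = a_i k$, and invoking $W \perp \g_0$ at each central or cross term, so that $x(1)\omega_{\g_0}$ and $\overline{L}(0)x$ both reduce to the same Casimir expression. The remaining steps are either routine mode manipulations or formal consequences of the definitions.
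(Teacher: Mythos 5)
Your proposal is correct and follows essentially the same route as the paper: both reduce the claim to showing $\omega_{\g}-\omega_{\g_0}$ is a singular vector, verify $x(1)\omega_{\g}=x(1)\omega_{\g_0}=x(-1){\bf 1}$ for $x\in\g_0$ by standard Virasoro-vector properties, and for $x\in W$ use the orthogonality $W\perp\g_0$ to identify $x(1)\omega_{\g_0}$ with the Casimir expression $\overline{L}(0)x(-1){\bf 1}$, which equals $x(-1){\bf 1}$ by (\ref{cond-1}). You are merely more explicit than the paper on points it leaves implicit (the vanishing of the modes $x(n)$ for $n\geq 2$ via the killed central and cross terms, the properness of the submodule generated by a weight-two singular vector, and the lifting argument for the necessity direction), so this is the same proof, carefully written out.
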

\begin{proof}
It suffices to prove that
\bea   \label{singular-condition}x(1) ( \omega_{\g} - \omega_{\g_0} ) = 0 \qquad (x \in \g). \eea
By using properties  of the Virasoro vector $\omega_{\g}$ we get
\bea &&  x(1) \omega_{\g} = x(-1){\bf 1} \quad (x \in \g),   \\
&&    x(1) \omega_{\g_0} = x(-1){\bf 1} \quad (x \in \g_0).   \eea
Therefore the condition (\ref{singular-condition}) holds for $ x \in \g_0$.
Assume now that $x \in W$. Since $W \bot \g_0$, we conclude that
\bea  x(1) \omega_{\g_0}  &=&\sum_{i = 0} ^{ n } \sum_{j = 1} ^{\dim \g_{0,i} }\frac{1}{2 (k_i + h_{0,i} ^{\vee} )} [[x, u^{i,j}],  v ^{i,j}] (-1) {\bf 1}
\nonumber \\
& = & \overline{L} (0) x (-1) {\bf 1} = x(-1) {\bf 1} .
\eea
This proves that  $\widetilde{U}_{\g_0}$ is conformally embedded into $L_{\g}(k,0)$.

It is also clear that the condition (\ref{cond-1}) is a necessary condition for conformal embedding.
\end{proof}

\begin{rem}
If $\widetilde{U}_{\g_0}$ is conformally embedded into
$L_{\g}(k,0)$, the equality of Virasoro vectors implies the equality
of central charges. By using usual formulas for central charges of
Sugawara Virasoro vectors we get
\bea  && \dim (\g_{0,0}) + \frac{k _1 \dim (\g_{0,1})}{ k_1 +
h_{0,1} ^{\vee} } + \cdots +  \frac{k _n \dim (\g_{0,n})}{ k_n +
h_{0,n} ^{\vee} } = \frac{k  \dim (\g)}{ k + h ^{\vee} } .
\label{num-cond} \eea

So our condition (\ref{cond-1}) from Theorem \ref{konf-1} implies numerical criterion (\ref{num-cond}).
But, in general, (\ref{num-cond}) is only a necessary condition for conformal embeddings.
\end{rem}
\begin{rem} \label{sing-napomena}
Assume that the  conditions from  Theorem \ref{konf-1}  are
satisfied. Then $N_{\g}(k,0)$ must have at least one singular vector
at conformal weight $2$. Some explicit formulas for these singular
vectors appeared in \cite{A-1994}, \cite{A-2003}, \cite{P1},
\cite{P2}, \cite{P3} and recently in \cite{AK-2010}.
\end{rem}

We shall now see how the theory presented here fits into some   examples.

\begin{example} \label{prvi}
Let $\g$ be a simple complex Lie algebra of type $A$, $D$ or $E$, ${\frak h}$ its Cartan subalgebra and $\Delta$ its root system.
Then
$$\g = {\frak h} \oplus W, \qquad W = \bigoplus_{ \alpha \in \Delta} \g_{\alpha}.$$
Take now $\g_0 = {\frak h}$ in the above construction. Then
$$ \overline{L}(0) \vert W  \equiv \mbox{Id} \quad \mbox{iff} \quad k=1. $$
This leads to the conformal embedding
$$L_{\g_0} (1,0) < L_{\g}(1,0). $$
Of course, this can be easily verified  by using explicit vertex operator construction of  $L_{\g}(1,0)$ (cf. \cite{FLM}, \cite{K2}).
\end{example}

\begin{example}
Let $\g = \frak{sp}(2 \ell, \C)$. Then $\g_0 = \frak{gl}(\ell, \C)$ can be realized as a subalgebra of $\g$ spanned by Cartan subalgebra ${\frak h}$ and short root vectors.
We have
$$ \g = \g_0 \oplus W, \qquad W= V_{\g_0} (2 \omega_1) \oplus V_{\g_0}(2 \omega_{\ell} ) . $$
Then
$$ \overline{L}(0) \vert W  \equiv \mbox{Id} \quad \mbox{iff} \quad k=1 \ \mbox{or} \ k=-1/2. $$
This leads to the conformal embeddings
$$L_{\g_0} (2,0) < L_{\g}(1,0), \qquad \widetilde{L}_{\g_0} (-1, 0) < L_{\g} (-1/2,0). $$
\end{example}

\begin{example}
 By using the same method as above we obtain the following conformal embeddings:

\item[(1)] Let $\g = \frak{sl} (\ell +1, \C)$, $\g_0 = \frak{gl}(\ell, \C)$. Then we have conformal embeddings:
$$L_{\g_0} (1,0) < L_{\g}(1,0), \qquad \widetilde{L}_{\g_0} (- \tfrac{\ell+1}{2}, 0) < L_{\g} (-\tfrac{\ell+1}{2},0). $$
\item[(2)] Let $\g = \frak{o} ( 2 \ell, \C)$, $\g_0 = \frak{gl}(\ell, \C)$. Then we have conformal embeddings:
$$L_{\g_0} (1,0) < L_{\g}(1,0), \qquad \widetilde{L}_{\g_0} (- 2, 0) < L_{\g} (- 2,0). $$

\end{example}

\begin{rem}   One can show that in examples presented above  the vertex operator algebra  $L_{\g}(k,0)$ is not a finite sum of irreducible $L_{\g_0}(k',0)$--modules.
But it is still possible that there exists certain twisted
$L_{\g}(k,0)$--module which is a finite sum of  irreducible twisted
$L_{\g_0}(k',0)$--modules. In particular, this holds for the
$\Z_2$--twisted module for $L_{\g}(1,0)$ which gives principal
realization of level one modules for affine Lie algebras in Example
\ref{prvi}.
\end{rem}

It seems that to get conformal embeddings such that  $L_{\g}(k,0)$ is a finite  sum of irreducible $L_{\g_0}(k',0)$--modules,
 one needs to consider the case when $\g_0$ is semisimple. In this article we shall make the first step and consider the important case when $\g_0$ is a simple Lie algebra.

 So we shall now assume that $\g_0$ is a simple Lie algebra. In the above settings we take $n=1$, $\g_0 = \g_{0,1}$.
 Set $$(\cdot, \cdot)_0 =(\cdot, \cdot) _{\g_{0,1}}, \quad h_0 ^{\vee} = h_{0,1} ^{\vee}, \quad a= a_1. $$   Let  $\rho_0$  be  the sum of
 all dominant integral weights for $\g_0$.

Assume that
$$
\g = \g_0 \oplus \g_1 \oplus \cdots \oplus \g_s,$$
such that
\begin{itemize}
\item[(1)]$\g_i = V_{\g_0}(\mu_i)$ is an irreducible finite-dimensional highest weight $\g_0$-module with highest weight $\mu_i$,

\item[(2)] $\g_i \bot \g_0$ for  $ i =1, \dots, s$ (with respect to $(\cdot, \cdot)$),
 \item[(3)] $   ( \mu_ i,  \mu_ i + 2  \rho _0 )_0   =  (\mu_j, \mu_j + 2 \rho_0 )_0 \qquad   (i,j > 0). $
\end{itemize}

  Then $\g_0$ generates
a subalgebra of $N_{\g} (k , 0)$ isomorphic to $ U_{\g_0} \cong
N_{\g_0} (k ' ,0)$, where $k ' = a k $.

Let $\omega_{\g}$ (resp. $\omega_{\g_0}$) be the Virasoro vector in $N_{\g}(k , 0)$
(resp. $N_{\g_0} (k' , 0)$) obtained by the Sugawara construction.

In what follows we choose $k$ such that
\bea && \frac{ ( \mu_i , \mu _i + 2 \rho_0 ) _0}{ 2 (k ' + h _0 ^{\vee} )} = 1, \quad (i=1, \ldots, s). \label{uv-k} \eea

Let $W = \g_1 \oplus \cdots \oplus \g_s$. Then $W$ is a $\g_0$--module such that
$$ \overline{L}(0) x(-1) {\bf 1} = x(-1) {\bf 1} \qquad (x \in W).$$
As before, let $\widetilde{U}_{\g_0} = \widetilde{L}_{\g_0}  (k' ,
0)$ be the subalgebra of $L_{\g}(k , 0)$ generated by $\g_0$. Now
Theorem \ref{konf-1} implies the following result:

 \begin{coro} \label{konf-ul} Assume that the above conditions hold.
 Then $ \widetilde{L}_{\g_0}  (k' , 0)$ is conformally embedded into  $L_{\g}(k , 0)$.
 \end{coro}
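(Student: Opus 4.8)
The plan is to reduce Corollary \ref{konf-ul} directly to Theorem \ref{konf-1}, since almost all the hypotheses needed to invoke the theorem have already been arranged in the discussion preceding the statement. The setup declares $n=1$ with $\g_0=\g_{0,1}$ simple, and $W=\g_1\oplus\cdots\oplus\g_s$ is by assumption a $\g_0$-module orthogonal to $\g_0$. Thus conditions (1) and (2) on the pair $(\g,\g_0)$ from Section \ref{kriterij} are satisfied, $\g_0$ generates the subalgebra $U_{\g_0}\cong N_{\g_0}(k',0)$ with $k'=ak$, and $\widetilde{U}_{\g_0}=\widetilde{L}_{\g_0}(k',0)$. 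Consequently it suffices to verify the single hypothesis (\ref{cond-1}) of Theorem \ref{konf-1}, namely that $\overline{L}(0)v=v$ for every $v\in W$.

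First I would compute the action of $\overline{L}(0)$ on a homogeneous vector $x(-1)\mathbf{1}$ with $x\in\g_i=V_{\g_0}(\mu_i)$. Since $\g_0$ is simple and $W\bot\g_0$, the formula for $x(1)\omega_{\g_0}$ derived inside the proof of Theorem \ref{konf-1} specializes to
\bea
\overline{L}(0)\,x(-1)\mathbf{1}
=\frac{1}{2(k'+h_0^{\vee})}\sum_{j}[[x,u^{1,j}],v^{1,j}](-1)\mathbf{1}.
\nonumber
\eea
The key point is that the operator $x\mapsto\sum_j[[x,u^{1,j}],v^{1,j}]$ is precisely the Casimir operator of $\g_0$ acting on the $\g_0$-module $W$, built from the dual bases $\{u^{1,j}\}$, $\{v^{1,j}\}$ with respect to $(\cdot,\cdot)_0$. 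On the irreducible component $\g_i=V_{\g_0}(\mu_i)$ this Casimir acts as the scalar $(\mu_i,\mu_i+2\rho_0)_0$ by Schur's lemma together with the standard eigenvalue formula. Hence $\overline{L}(0)$ acts on $\g_i$ as the scalar $(\mu_i,\mu_i+2\rho_0)_0/\bigl(2(k'+h_0^{\vee})\bigr)$, which equals $1$ exactly by the choice of $k$ in (\ref{uv-k}). Since condition (3) guarantees this scalar is the same for all $i$, we obtain $\overline{L}(0)v=v$ uniformly for all $v\in W$.

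With (\ref{cond-1}) verified, Theorem \ref{konf-1} applies verbatim and yields that $\omega_{\g}-\omega_{\g_0}\in N^1_{\g}(k,0)$, so that $\widetilde{L}_{\g_0}(k',0)$ is conformally embedded into $L_{\g}(k,0)$, which is the assertion of the corollary. I anticipate that the only genuinely substantive step is the identification of the double-bracket sum with the Casimir scalar $(\mu_i,\mu_i+2\rho_0)_0$; one must be careful that the dual bases are taken with respect to the normalized form $(\cdot,\cdot)_0$ rather than the ambient form $(\cdot,\cdot)$ on $\g$, since these differ by the Dynkin index $a$, and that this normalization is consistent with the definition of $\omega_{\g_{0,1}}$ using $k_1=k'=ak$. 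Everything else is a direct specialization of the already-proved theorem.
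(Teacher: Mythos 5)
Your proposal is correct and follows essentially the same route as the paper: the paper also just observes that with the choice of $k$ in (\ref{uv-k}) one has $\overline{L}(0)\,x(-1){\bf 1} = x(-1){\bf 1}$ for all $x \in W$, and then invokes Theorem \ref{konf-1}. Your explicit identification of the double-bracket sum with the Casimir operator of $\g_0$ (with dual bases taken for the normalized form $(\cdot,\cdot)_0$, consistent with $k'=ak$) merely spells out the step the paper leaves implicit, and it is carried out correctly.
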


The simplicity of the vertex operator algebra $ \widetilde{L}_{\g_0} (k'
, 0)$ will be  investigated in the next section.

\section{On semisimplicity of $ L_{\g}  (k , 0)$ as a $\widehat{\g_0}$--module}
\label{kriterij-poluprost}

In this section we give sufficient conditions for complete
reducibility of $ L_{\g}  (k , 0)$ as a $\widehat{\g_0}$--module. In
particular, we obtain the conformal embedding of simple vertex
operator algebras $L_{\g_0}(k',0) < L_{\g}(k,0)$.

We assume that $\g_0$ is a simple Lie algebra and that  all conditions on embedding $\g_0 < \g$ from Section
\ref{kriterij} hold. Furthermore, we assume that $\g_0$ is the fixed
point subalgebra of an automorphism $\sigma$ of $\g$ of order $s+1$,
and that $V_{\g_0}(\mu_i)$ is the eigenspace associated to the
eigenvalue $\xi ^i$ (for $i=1, \ldots ,s$), where $\xi$ denotes the
corresponding primitive root of unity. Then $\sigma$ can be extended
to a finite-order automorphism of the simple vertex operator algebra
$L_{\g} (k,0)$ which admits the following decomposition
$$L_{\g}(k,0) = L_{\g}(k,0) ^0 \oplus L_{g}(k,0) ^1 \oplus \cdots \oplus L_{\g}(k,0) ^s$$
where
$$L_{\g}(k,0) ^{i} = \{ v \in L_{\g}(k,0) \ \vert \ \sigma (v) = \xi ^i v \}. $$
Clearly $L_{\g}(k,0) ^{i}$ is a $\widehat{\g_0}$--module.

For a dominant weight $\mu$ for $\g_0$ we define $\widehat{\mu} = k'
\Lambda_0 + \mu$. Denote by $L_{\g_0} (k', \mu)$ the irreducible
highest weight $\widehat{\g_0}$--module with highest
weight~$\widehat{\mu}$. Note that the lowest conformal weight of any
$\widehat{\g_0}$--module with highest weight~$\widehat{\mu}$ is
given by the formula
\bea && \frac{ ( \mu , \mu + 2 \rho_0 ) _0}{ 2 (k ' + h _0 ^{\vee}
)}. \label{lowest-confw} \eea

\vskip 5mm

We shall make the following assumptions:
\bea
&&
V_{\g_0} (\mu_i) \otimes V_{\g_0} (\mu_j) = V_{\g_0} (\mu_l) \oplus  \bigoplus_{r=1} ^{m_{i,j}} V_{\g_0} (\nu_{r,i,j}) \label{uvj-1} \\
&& L_{\g}(k,0)^{l} \ \mbox{does not contain} \
\widehat{\g_0}-\mbox{singular vector of weight} \
\widehat{\nu_{r,i,j}}, \label{uvj-2} \\
&& ( \mbox{where} \ l = i + j \ (\mod \  s+1 ) \  ), \nonumber
 \eea
for all $i,j \in \{1, \dots, s\}$ and $r=1, \dots, m_{i,j}.$

\begin{thm} \label{general}
Assume that conditions (\ref{uvj-1}) and (\ref{uvj-2}) hold. Then
$$L_{\g}(k,0) = L_{\g_0}(k',0) \oplus L_{\g_0} (k', \mu_1) \oplus \cdots \oplus L_{\g_0} (k',\mu_s). $$
\end{thm}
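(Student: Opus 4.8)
The plan is to establish the claimed decomposition by combining the conformal embedding from Corollary \ref{konf-ul} with the orbifold decomposition of $L_{\g}(k,0)$ under the automorphism $\sigma$, and then using conditions (\ref{uvj-1})--(\ref{uvj-2}) to force each graded piece $L_{\g}(k,0)^i$ to be \emph{irreducible} as a $\widehat{\g_0}$--module. First I would record what is already known: by Corollary \ref{konf-ul} the subalgebra $\widetilde{L}_{\g_0}(k',0)$ is conformally embedded into $L_{\g}(k,0)$, so $L_{\g}(k,0)$ and all its $\sigma$--eigenspaces $L_{\g}(k,0)^i$ carry the \emph{same} Virasoro action coming from $\omega_{\g_0}$; in particular conformal weights in $L_{\g}(k,0)$ agree with the $\widehat{\g_0}$--conformal weights computed by (\ref{lowest-confw}). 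Since $\g_0$ is the fixed-point subalgebra of $\sigma$, each $L_{\g}(k,0)^i$ is a $\widehat{\g_0}$--module, and the piece $L_{\g}(k,0)^0$ containing the vacuum is exactly $\widetilde{L}_{\g_0}(k',0)$.

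Next I would identify the lowest-weight piece of each eigenspace. The degree-one part of $L_{\g}(k,0)$ is $\g = \g_0 \oplus \bigoplus_i V_{\g_0}(\mu_i)$, where $V_{\g_0}(\mu_i)$ sits in $L_{\g}(k,0)^i$. Because the normalization (\ref{uv-k}) forces the conformal weight of $V_{\g_0}(\mu_i)$ to equal $1$, each $V_{\g_0}(\mu_i)$ is a $\widehat{\g_0}$--singular vector generating a highest-weight submodule. Complete reducibility then reduces to showing that $L_{\g}(k,0)^i$ contains \emph{no other} $\widehat{\g_0}$--singular vectors, so that $L_{\g}(k,0)^i \cong L_{\g_0}(k',\mu_i)$ and the sum is as claimed.

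To control the singular vectors I would argue by induction on conformal weight, using the fusion-type hypotheses. A new singular vector in some $L_{\g}(k,0)^l$ would, by the structure of the $\sigma$--grading and the operator-product expansion, arise inside a product $L_{\g}(k,0)^i \cdot L_{\g}(k,0)^j$ with $l \equiv i+j \pmod{s+1}$; projecting to lowest $\widehat{\g_0}$--type and invoking (\ref{uvj-1}), the only irreducible $\g_0$--constituents available are $V_{\g_0}(\mu_l)$ (already accounted for by the generator) and the $V_{\g_0}(\nu_{r,i,j})$. Hypothesis (\ref{uvj-2}) says precisely that no $\widehat{\g_0}$--singular vector of weight $\widehat{\nu_{r,i,j}}$ occurs in $L_{\g}(k,0)^l$, which eliminates every spurious singular vector and pins down $L_{\g}(k,0)^l \cong L_{\g_0}(k',\mu_l)$.

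\textbf{The main obstacle} I anticipate is making rigorous the step that \emph{every} $\widehat{\g_0}$--singular vector in an eigenspace $L_{\g}(k,0)^l$ is ``detected'' by a fusion product of lower pieces of the specific irreducible type $\nu_{r,i,j}$. This requires a clean argument that the $\sigma$--grading is compatible with the vertex algebra product in the relevant way and that one may restrict attention to the finitely many $\g_0$--types appearing in (\ref{uvj-1}); here I would lean on the orbifold-theory results of \cite{DM} to guarantee that each $L_{\g}(k,0)^i$ is a (simple-current type) module and that the fusion of these modules is governed by the tensor product decomposition of the lowest-weight $\g_0$--modules, so that conditions (\ref{uvj-1})--(\ref{uvj-2}) genuinely suffice.
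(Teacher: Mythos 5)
Your skeleton matches the paper's proof (conformal embedding via Corollary \ref{konf-ul}, the $\sigma$--eigenspace grading, fusion-rule input from (\ref{uvj-1})--(\ref{uvj-2}), and an appeal to \cite{DM}), but there is a genuine gap in your central reduction. You claim that complete reducibility ``reduces to showing that $L_{\g}(k,0)^i$ contains no other $\widehat{\g_0}$--singular vectors, so that $L_{\g}(k,0)^i \cong L_{\g_0}(k',\mu_i)$.'' That implication fails: a $\Z_{\geq 0}$--graded $\widehat{\g_0}$--module with finite-dimensional graded pieces whose only singular vector is its lowest-weight generator need not be irreducible, nor even generated by its top component. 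For instance, in a non-split extension $0 \to L_{\g_0}(k',\mu_i) \to M \to L_{\g_0}(k',\lambda) \to 0$ the preimage of the quotient's singular vector is annihilated by positive modes only modulo the submodule, so it is invisible to a singular-vector count in $M$, yet $M \neq L_{\g_0}(k',\mu_i)$. For the same reason your opening assertion that $L_{\g}(k,0)^0$ ``is exactly'' $\widetilde{L}_{\g_0}(k',0)$ is not known at that stage --- it is part of the conclusion. The missing step, which is the heart of the paper's argument, is to show first that the sum $\widetilde{L}_{\g_0}(k',0) \oplus \widetilde{L}_{\g_0}(k',\mu_1) \oplus \cdots \oplus \widetilde{L}_{\g_0}(k',\mu_s)$ of the highest weight submodules generated by the top components is closed under all products $u_n v$: by the fusion-rules argument of \cite[Lemma 8.1]{AP}, conditions (\ref{uvj-1}) and (\ref{uvj-2}) force $u_n v \in \widetilde{L}_{\g_0}(k',\mu_l)$ with $l = i+j \ (\mod \ s+1)$, so this sum is a vertex subalgebra; since it contains $\g$, which generates $L_{\g}(k,0)$, it equals $L_{\g}(k,0)$. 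Only after this does each eigenspace become a highest weight $\widehat{\g_0}$--module, and only then can singular-vector exclusion say anything about irreducibility.

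Your fallback on \cite{DM} also misattributes the tool: quantum Galois theory says nothing about fusion of the eigenspaces being ``governed by the tensor product decomposition of the lowest-weight $\g_0$--modules'' --- that control comes from the fusion-rules lemma of \cite{AP} just described. What \cite{DM} actually supplies, and what the paper uses as its final step, is that once $L_{\g}(k,0) = \bigoplus_l \widetilde{L}_{\g_0}(k',\mu_l)$ is established, the simplicity of $L_{\g}(k,0)$ together with the $\Z_{s+1}$--grading forces $\widetilde{L}_{\g_0}(k',0)$ to be a simple vertex operator algebra and each $\widetilde{L}_{\g_0}(k',\mu_l)$ to be a simple module over it. This bypasses entirely your proposed induction on conformal weight ruling out all spurious singular vectors --- the very step you flag as your main obstacle --- so repairing your argument amounts to reorganizing it into the paper's order: subalgebra closure and generation first, simplicity via \cite{DM} last.
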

\begin{proof}
We have already seen that $\widetilde{L}_{\g_0} (k', 0)$ is
conformally embedded into  $L_{\g} (k,0)$. We also have that $L_{\g}
(k,0)$ contains $\hat{\g_0}$--submodules $\widetilde{L}_{\g_0} (k',
\mu_i)$ generated by top components  $V_{\g_0} (\mu_i),
(i=1,\dots,s)$. By using  assumptions (\ref{uvj-1}) and
(\ref{uvj-2}) and standard fusion rules arguments (similarly as in
\cite[Lemma 8.1]{AP}) one can conclude that if $$u \in V_{\g_0}
(\mu_i) \subset \widetilde{L}_{\g_0} (k', \mu_i), \quad   v\in
V_{\g_0} (\mu_j)  \subset \widetilde{L}_{\g_0} (k', \mu_j) $$ then
$$ u_n v \in \widetilde{L}_{\g_0} (k', \mu_{l}) \qquad (l = i + j \  ( \mod \ s+1) ). $$
(Here $\mu_0 = 0$). So $L_{\g} (k,0)$ contains a vertex subalgebra
isomorphic to
$$\widetilde{L}_{\g_0} (k', 0) \oplus  \widetilde{L}_{\g_0} (k', \mu_1) \oplus \cdots \oplus \widetilde{L}_{\g_0} (k', \mu_s). $$
Since every generator of $\g$ belongs to this subalgebra we get that
$$L_{\g} (k,0)  = \widetilde{L}_{\g_0} (k', 0) \oplus  \widetilde{L}_{\g_0} (k', \mu_1) \oplus \cdots \oplus \widetilde{L}_{\g_0} (k', \mu_s). $$
Therefore $L_{\g} (k,0)$ is $\Z_{s+1}$--graded, and each component
is a highest weight $\hat{\g_0}$--module. Simplicity of $L_{\g}
(k,0)$ gives that $\widetilde{L}_{\g_0} (k', 0)$ is a simple vertex
operator algebra and $\widetilde{L}_{\g_0} (k', \mu_i)$ is its
simple module ($i=1, \dots, s$) (cf. \cite{DM}). The proof follows.
\end{proof}

\section{Some conformal embeddings}
\label{sec-3} In this section we shall apply results from Sections
\ref{kriterij} and \ref{kriterij-poluprost} and obtain some
conformal embeddings at negative integer and rational levels. In
this way we shall present a new uniform proof of conformal
embeddings which will include embeddings from \cite{AP} and
\cite{P4}, and give some new conformal embeddings associated to
pairs of simple Lie algebras $(G_2,A_2)$, $(E_6,F_4)$ and
$(D_{\ell}, B_{\ell-1})$.

\begin{enumerate}
\item[{}] Table 1.
 ${\g}$, $\g_0$ are  simple Lie
algebras, $\g_0 < \g$.
$$
\begin{array}{| c | c | c|  c | c | }
\hline
{\g} & {\g}_0& \mbox{decomposition of} \ \g& k&
k'    \\ \hline
A_{2\ell-1} & C_{\ell} & \g_0 \oplus V_{\g_0} ( \omega_2) & -1 & -1 \\
\hline
D_{\ell} & B_{\ell-1} & \g_0 \oplus V_{\g_0} (\omega_1) & -\ell + 2 & -\ell + 2 \\
\hline
E_{6} & F_{4} & \g_0  \oplus V_{\g_0} (\omega_4) & -3 & -3  \\
\hline
G_{2} & A_{2} & \g_0 \oplus V_{\g_0} (\omega_1) \oplus V_{\g_0}(\omega_2)& -5/3 & -5/3 \\
\hline
B_{\ell} & D_{\ell} & \g_0 \oplus V_{\g_0} (\omega_1) & -\ell + 3/2 & -\ell +3/2 \\
\hline
\end{array}
$$
\end{enumerate}

In the first three cases $\g_0$ is the fixed point subalgebra of
$\g$ of a Dynkin diagram automorphism of order two  described in
\cite{K}, Section 8. (Note also that the pair $(A_{2 \ell -1},
C_{\ell})$ was studied in \cite{AP}).

When $\g$ is a simple Lie algebra of type $G_2$, then $\g_0$ is a
Lie subalgebra generated by long root vectors. The pair $(B_{\ell},
D_{\ell})$ was described in \cite{P4} (see also \cite{FRT}).

\begin{thm} \label{general-2}
For every pair $(\g, \g_0)$ of simple Lie algebras from Table 1, we
have that $L_{\g}(k,0)$ is a completely reducible
$L_{\g_0}(k',0)$--module:
$$ L_{\g}(k,0) = L_{\g_0}(k',0) \oplus L_{\g_0}(k', \mu_1) \oplus \cdots L_{\g_0}(k',\mu_s) $$
where
$$\g = \g_0 \oplus  V_{\g_0}( \mu_1) \oplus \cdots V_{\g_0}(\mu_s), $$
$k$ and $k'$ are as in Table 1. In particular, $L_{\g}(k,0)$ is a
$\Z_{s+1}$--graded extension of $L_{\g_0}(k',0)$.
\end{thm}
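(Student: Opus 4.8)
The plan is to verify, for each of the five pairs $(\g,\g_0)$ in Table 1, that the hypotheses of Theorem \ref{general} are satisfied, and then invoke that theorem directly. This reduces the whole statement to two bookkeeping tasks per row: first, confirming the conformal embedding via Corollary \ref{konf-ul}, and second, checking the tensor-product condition (\ref{uvj-1}) together with the singular-vector condition (\ref{uvj-2}).

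First I would set up the common framework. For each pair I would record the branching $\g = \g_0 \oplus V_{\g_0}(\mu_1)\oplus\cdots\oplus V_{\g_0}(\mu_s)$ given in the table, the Dynkin index $a$ of the embedding $\g_0<\g$, and hence $k'=ak$. I would then check the numerical condition (\ref{uv-k}), namely that $(\mu_i,\mu_i+2\rho_0)_0 / \bigl(2(k'+h_0^\vee)\bigr)=1$ holds for each $i$, using the stated value of $k'$; this simultaneously verifies condition (3) on equality of Casimir eigenvalues across the $\mu_i$ and confirms (via Corollary \ref{konf-ul}) the conformal embedding $\widetilde{L}_{\g_0}(k',0)<L_{\g}(k,0)$. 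In the first three rows ($s=1$) this is a single computation; in the $G_2$ row ($s=2$, $\sigma$ of order $3$) there are two weights $\omega_1,\omega_2$ whose Casimirs must coincide, and in the $B_\ell\supset D_\ell$ row I would use the half-integer level directly. I would also identify $\sigma$ explicitly as the order-$(s+1)$ automorphism with $\g_0$ as fixed points (the order-two Dynkin diagram automorphisms of $A_{2\ell-1}$, $D_\ell$, $E_6$ for the first three rows, the order-three automorphism for $G_2\supset A_2$, and the relevant automorphism for $B_\ell\supset D_\ell$), so that the eigenspace structure $V_{\g_0}(\mu_i)$ at eigenvalue $\xi^i$ holds as required in Section \ref{kriterij-poluprost}.

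The substantive step is verifying (\ref{uvj-1}) and (\ref{uvj-2}) for each row. For (\ref{uvj-1}) I would compute the finite-dimensional tensor product decompositions $V_{\g_0}(\mu_i)\otimes V_{\g_0}(\mu_j)$ purely at the level of simple-Lie-algebra representation theory, isolating the component $V_{\g_0}(\mu_l)$ with $l=i+j \pmod{s+1}$ and listing the remaining summands $V_{\g_0}(\nu_{r,i,j})$. In the $s=1$ cases this means decomposing $V_{\g_0}(\mu_1)\otimes V_{\g_0}(\mu_1)$ and checking that the vacuum component $V_{\g_0}(0)=\g_0\oplus(\text{trivial})$ appears with the expected multiplicity; for $G_2$ I would need the three products among $\omega_1,\omega_2$. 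For (\ref{uvj-2}) the task is to show that, for each leftover weight $\nu_{r,i,j}$, the graded piece $L_{\g}(k,0)^{l}$ contains no $\widehat{\g_0}$--singular vector of weight $\widehat{\nu_{r,i,j}}$. I expect this to be the main obstacle, since it is not a finite-dimensional computation but a statement about the affine module $L_{\g}(k,0)$ at the specific negative or rational level. The cleanest route, mirroring \cite[Lemma 8.1]{AP}, is to compare lowest conformal weights: a singular vector of weight $\widehat{\nu_{r,i,j}}$ would carry conformal weight given by (\ref{lowest-confw}) with $\mu=\nu_{r,i,j}$, and I would argue that this weight is either too large to occur in the relevant graded component $L_{\g}(k,0)^l$ (whose top is $V_{\g_0}(\mu_l)$ at conformal weight $1$ or the appropriate lowest value), or is otherwise excluded by the eigenvalue of $\overline{L}(0)$.

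Concretely, for the leftover weights $\nu_{r,i,j}$ I would compute $(\nu_{r,i,j},\nu_{r,i,j}+2\rho_0)_0/\bigl(2(k'+h_0^\vee)\bigr)$ and show it does not match the conformal weight at which the corresponding homogeneous subspace of $L_{\g}(k,0)^l$ begins, so no singular vector can exist there; where this conformal-weight comparison is inconclusive I would fall back on the explicit module structure of $L_{\g}(k,0)$ at that level, using the known classification of its modules at admissible or negative integer levels (cited as \cite{A-1994}, \cite{P1}--\cite{P4}, \cite{AP}). Once (\ref{uvj-1}) and (\ref{uvj-2}) are in hand for every pair, Theorem \ref{general} yields the decomposition $L_{\g}(k,0)=L_{\g_0}(k',0)\oplus L_{\g_0}(k',\mu_1)\oplus\cdots\oplus L_{\g_0}(k',\mu_s)$, and the $\Z_{s+1}$--grading is precisely the eigenspace decomposition under $\sigma$, which completes the proof.
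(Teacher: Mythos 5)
Your overall strategy is the paper's: reduce each row of Table 1 to Theorem \ref{general} by checking (\ref{uv-k}) (hence the conformal embedding via Corollary \ref{konf-ul}), computing the finite-dimensional tensor decompositions for (\ref{uvj-1}) (the paper's Lemma \ref{tensor-dec-gen}), and excluding singular vectors for (\ref{uvj-2}) by a conformal-weight argument. The correct form of that exclusion, which you gesture at but state loosely, is integrality: a $\widehat{\g_0}$--singular vector of weight $\widehat{\nu}$ has Sugawara $\overline{L}(0)$--eigenvalue $(\nu,\nu+2\rho_0)_0/(2(k'+h_0^{\vee}))$, and since $\overline{L}(0)=L(0)$ after the conformal embedding and $L(0)$ has spectrum in $\Z_{\geq 0}$ on $L_{\g}(k,0)$, a non-integral value rules the vector out regardless of where the graded component ``begins.'' The paper's Lemma \ref{lowest-conf-w} verifies non-integrality ($\tfrac52$, $\tfrac94$; $2\pm\tfrac{1}{\ell-1}$; $2\pm\tfrac{2}{2\ell-1}$) for every leftover weight in the $(G_2,A_2)$, $(D_\ell,B_{\ell-1})$ and $(B_\ell,D_\ell)$ rows, so there your plan goes through.

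The genuine gap is the $(E_6,F_4)$ row, precisely where this mechanism breaks and where you fall back on an argument that does not exist. Among the leftover weights of $V_{F_4}(\omega_4)\otimes V_{F_4}(\omega_4)$, the weight $\nu=\omega_3$ (written $-7\Lambda_0+\Lambda_3$ in the paper) has lowest conformal weight exactly $2$ --- an integer --- so the conformal-weight comparison is inconclusive, exactly as you anticipated might happen. But your proposed fallback, ``the known classification of modules of $L_{\g}(k,0)$ at that level,'' is unavailable: none of \cite{A-1994}, \cite{P1}--\cite{P4}, \cite{AP} classifies $L_{E_6}(-3,0)$--modules, and even such a classification would concern $E_6^{(1)}$--module structure, not the existence of an $F_4^{(1)}$--singular vector inside the algebra itself. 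The paper closes this hole by brute force in Lemma \ref{observationEF}: it exhibits the unique (up to scalar) singular vector $v_{E_6}$ of conformal weight $2$ in $N_{E_6}(-3,0)$, writes down an explicit basis of the relevant weight space in terms of $E_6$ root vectors, and shows the singularity equations force any candidate vector of weight $-7\Lambda_0+\Lambda_3$ to be a combination of $f_{(1)}(0)v_{E_6}$ and $f_{\epsilon_5-\epsilon_4}(0)v_{E_6}$, hence zero in the simple quotient $L_{E_6}(-3,0)$. Without a substitute for this computation your proof does not close. A secondary, lesser point: for $(A_{2\ell-1},C_\ell)$ the paper simply cites \cite{AP}, whereas your uniform treatment would additionally require the decomposition of $V_{C_\ell}(\omega_2)\otimes V_{C_\ell}(\omega_2)$ and the corresponding conformal-weight checks at $k'=-1$, which are absent from Lemma \ref{tensor-dec-gen}; that is unverified bookkeeping rather than a missing idea, unlike the $E_6$ case.
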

\begin{proof}
 The assertion  when $(\g,\g_0) = (A_{2\ell-1}, C_{\ell})$ was proved in   \cite{AP} by using similar method. We shall now consider other cases.

 By using decompositions from Table 1, we see that there is a conformal embedding of $\widetilde{L}_{\g_0}(k',0)$ into $L_{\g}(k,0)$.
 Next we need to check that all pairs $(\g,\g_0)$  satisfy conditions from Theorem \ref{general}. We use the tensor product decompositions from Lemma \ref{tensor-dec-gen} (see below).
Assume first that $(\g,\g_0) \ne (E_6, F_4)$. Then we directly see
that for every weight  $\nu_{i,j,m}$ appearing in tensor product
decomposition, singular vector of weight $\widehat{\nu_{i,j,m}}$
does not have integral conformal weight (see Lemma
\ref{lowest-conf-w} below). Therefore there are no singular vectors
of such conformal weights in $L_{\g}(k,0)$.

By using different methods we directly see in Lemma
\ref{observationEF} that such condition holds for the pair $(E_6,
F_4)$. The proof now follows from Theorem \ref{general}.
\end{proof}

\begin{lem} \label{tensor-dec-gen} We have the following tensor product decompositions:
\item[(1)] $V_{A_2} (\omega_1) \otimes V_{A_2} (\omega_1) = V_{A_2} (2 \omega_1) \oplus V_{A_2} (\omega_2)$,
\item[(2)] $V_{A_2} (\omega_2) \otimes V_{A_2} (\omega_2) = V_{A_2} (2 \omega_2) \oplus V_{A_2} (\omega_1)$,
\item[(3)] $V_{A_2} (\omega_1) \otimes V_{A_2} (\omega_2) = V_{A_2} ( \omega_1 + \omega_2) \oplus V_{A_2}
(0)$,
\item[(4)]  $V_{F_4} (\omega_4) \otimes V_{F_4} (\omega_4) =
V_{F_4} (2 \omega_4) \oplus V_{F_4} (\omega_3) \oplus V_{F_4}
(\omega_4) \oplus V_{F_4} (\omega_1) \oplus V_{F_4}(0)$,
 \item[(5)]$V_{B_{\ell -1}}(\omega _1) \otimes V_{B_{\ell -1}}(\omega _1) = V_{B_{\ell -1}}(2 \omega _1) \oplus
V_{B_{\ell -1}}( \omega _2) \oplus V_{B_{\ell -1}}(0) \quad (\ell
\geq 4)$,
\item[(6)] $V_{D_{\ell}}(\omega _1) \otimes V_{D_{\ell}}(\omega _1) = V_{D_{\ell}}(2 \omega _1) \oplus
V_{D_{\ell}}( \omega _2) \oplus V_{D_{\ell}}(0)$.
\end{lem}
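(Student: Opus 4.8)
The plan is to establish each decomposition by combining a dimension count with the elementary bound that every irreducible constituent of $V_{\g_0}(\lambda) \otimes V_{\g_0}(\mu)$ has highest weight of the form $\lambda + \nu$ for some weight $\nu$ of $V_{\g_0}(\mu)$, and is in particular dominated by $\lambda + \mu$. For the small fundamental representations occurring here this narrows the list of possible summands to a short list; once the candidate constituents are identified, matching total dimensions (and checking that each appears with multiplicity one) closes the argument.

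For the orthogonal cases (5) and (6) I would argue invariant-theoretically and uniformly. Here $V_{\g_0}(\omega_1)$ is the defining vector representation $\mathbb{C}^N$, with $N = 2\ell-1$ for $B_{\ell-1}$ and $N = 2\ell$ for $D_\ell$, and
$$V_{\g_0}(\omega_1) \otimes V_{\g_0}(\omega_1) = S^2(\mathbb{C}^N) \oplus \wedge^2(\mathbb{C}^N).$$
The exterior square is the adjoint representation $\mathfrak{so}(N) \cong V_{\g_0}(\omega_2)$, and the symmetric square splits as the traceless part $V_{\g_0}(2\omega_1)$ together with the one-dimensional span of the invariant metric, contributing the summand $V_{\g_0}(0)$. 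This gives both (5) and (6) simultaneously, and a dimension check ($N^2 = \tfrac{N(N+1)}{2} + \tfrac{N(N-1)}{2}$ with $\dim V_{\g_0}(2\omega_1) = \tfrac{N(N+1)}{2}-1$) confirms nothing else occurs; the hypothesis $\ell \geq 4$ in (5) guarantees the rank is large enough for $\wedge^2(\mathbb{C}^N)$ to coincide with the irreducible adjoint.

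The $A_2$ cases (1)--(3) are handled by the same splitting. For (1), with $V_{A_2}(\omega_1) = \mathbb{C}^3$ one has $S^2 = V_{A_2}(2\omega_1)$ and $\wedge^2(\mathbb{C}^3) \cong (\mathbb{C}^3)^\ast = V_{A_2}(\omega_2)$, giving $V_{A_2}(\omega_1)^{\otimes 2} = V_{A_2}(2\omega_1) \oplus V_{A_2}(\omega_2)$ ($9 = 6+3$). Decomposition (2) then follows by applying the contragredient, i.e.\ the diagram automorphism $\omega_1 \leftrightarrow \omega_2$, to (1). For (3) I would use $V_{A_2}(\omega_1) \otimes V_{A_2}(\omega_2) = V_{A_2}(\omega_1) \otimes V_{A_2}(\omega_1)^\ast = \End(V_{A_2}(\omega_1))$, which decomposes as $\mathfrak{sl}_3 \oplus \mathbb{C} = V_{A_2}(\omega_1+\omega_2) \oplus V_{A_2}(0)$, with $9 = 8+1$.

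The main obstacle is (4), the $F_4$ case, where no elementary invariant-theoretic splitting identifies the pieces for free. Here $V_{F_4}(\omega_4)$ is the $26$-dimensional fundamental representation, and I would still separate $V_{F_4}(\omega_4)^{\otimes 2} = S^2 \oplus \wedge^2$ (of dimensions $351$ and $325$) and decompose each part using the Racah--Speiser (Brauer--Klimyk) algorithm applied to the character of $V_{F_4}(\omega_4)$, or equivalently via its known weight multiplicities. The candidate constituents $V_{F_4}(2\omega_4)$, $V_{F_4}(\omega_3)$, $V_{F_4}(\omega_4)$, $V_{F_4}(\omega_1)$, $V_{F_4}(0)$ have dimensions $324, 273, 26, 52, 1$, summing to $676 = 26^2$; checking that these are exactly the constituents, each of multiplicity one (with $\wedge^2 = V_{F_4}(\omega_3) \oplus V_{F_4}(\omega_1)$ and $S^2 = V_{F_4}(2\omega_4) \oplus V_{F_4}(\omega_4) \oplus V_{F_4}(0)$), is the computational heart of the lemma and can be confirmed against the standard representation tables for $F_4$.
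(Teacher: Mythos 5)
Your proposal is correct, and in fact it goes beyond the paper, which offers no proof of this lemma at all: the authors record these tensor product decompositions as standard facts about finite-dimensional representations of simple Lie algebras (verifiable from classical invariant theory or representation tables), so your write-up supplies an argument the paper takes for granted. Your individual steps check out: for (1)--(3), $S^2(\C^3)=V_{A_2}(2\omega_1)$, $\wedge^2(\C^3)\cong(\C^3)^* = V_{A_2}(\omega_2)$, dualizing via the diagram automorphism, and $\End(\C^3)=\frak{sl}_3\oplus\C$ are exactly the right identifications; for (5)--(6), the splitting of $\C^N\otimes\C^N$ into symmetric and exterior squares, with $\wedge^2(\C^N)\cong\frak{so}(N)$ and the invariant metric spanning the trivial summand, is the classical argument and your dimension counts are correct. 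One point deserves sharper phrasing: $\wedge^2(\C^N)$ is the adjoint representation of $\frak{so}(N)$ for \emph{every} rank; what the hypothesis $\ell\geq 4$ actually buys is the labeling, since for $B_2$ (the case $\ell=3$, rank $\ell-1=2$) the adjoint is $V(2\omega_2)$ rather than $V(\omega_2)$ in Bourbaki conventions, so item (5) as stated would fail there ($14+4+1\neq 25$), whereas for rank $\geq 3$ the adjoint is indeed $V(\omega_2)$. For (4), your deferral to the Racah--Speiser algorithm or tables matches the level of rigor the paper itself assumes, and your proposed refinement $S^2 = V_{F_4}(2\omega_4)\oplus V_{F_4}(\omega_4)\oplus V_{F_4}(0)$ ($351=324+26+1$) and $\wedge^2 = V_{F_4}(\omega_3)\oplus V_{F_4}(\omega_1)$ ($325=273+52$) is correct; if you want it table-free, the three symmetric summands can be exhibited intrinsically via the Jordan-algebra model of the $26$ (the invariant symmetric trace form gives $V(0)\subset S^2$, the Jordan product gives an equivariant surjection $S^2\to V(\omega_4)$), and $V(\omega_1)\subset\wedge^2$ comes from $\frak{f}_4\subset\frak{so}(26)\cong\wedge^2(\C^{26})$.
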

Using relation (\ref{lowest-confw}) we obtain:
\begin{lem} \label{lowest-conf-w} We have:
\item[(1)] The lowest conformal weights of $A_2 ^{(1)}$--modules with highest weights $-\tfrac{11}{3} \Lambda_0 + 2
\Lambda_i$ ($i=1,2$) and $-\tfrac{11}{3} \Lambda_0 + \Lambda_1 +
\Lambda_2$ are $\frac{5}{2}$ and $\frac{9}{4}$, respectively.
\item[(2)] The lowest conformal weights of $B_{\ell -1} ^{(1)}$--modules with highest weights $- \ell \Lambda_0 + 2 \Lambda _1$ and $- \ell
\Lambda_0 + \Lambda _2$ are $2+\frac{1}{\ell-1}$ and
$2-\frac{1}{\ell-1}$, respectively.
\item[(3)] The lowest conformal weights of $D_{\ell} ^{(1)}$--modules with highest weights $(- \ell -\frac{1}{2}) \Lambda_0 + 2
\Lambda _1$ and $(- \ell -\frac{1}{2}) \Lambda_0 + \Lambda _2$ are
$2+\frac{2}{2 \ell-1}$ and $2-\frac{2}{2 \ell-1}$, respectively.
\end{lem}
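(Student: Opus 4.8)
The plan is to compute each lowest conformal weight directly from formula (\ref{lowest-confw}), namely $\frac{(\mu,\mu+2\rho_0)_0}{2(k'+h_0^\vee)}$, once the relevant data for each simple Lie algebra are fixed. For each of the three parts I would first record $h_0^\vee$ and the level $k'$ (read off from Table 1), then evaluate the bilinear form $(\mu,\mu+2\rho_0)_0$ for the specific weights $\mu$ appearing in the tensor product decompositions of Lemma \ref{tensor-dec-gen}. Concretely, I would use the standard realization of the root and weight lattices in an orthonormal (or suitably normalized) basis, express each fundamental weight $\omega_i$ and $\rho_0$ in that basis, and compute the inner products. The denominator $2(k'+h_0^\vee)$ is a single number in each case, so the whole computation reduces to a handful of rational arithmetic evaluations.

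For part (1), with $\g_0 = A_2$ one has $h_0^\vee = 3$ and $k' = -5/3$, so $k'+h_0^\vee = 4/3$ and the denominator is $8/3$. Using $(\omega_i,\omega_j)_0$ from the inverse Cartan matrix of $A_2$ and $\rho_0 = \omega_1+\omega_2$, I would evaluate $(2\omega_i, 2\omega_i + 2\rho_0)_0$ for $i=1,2$ and $(\omega_1+\omega_2,\omega_1+\omega_2+2\rho_0)_0$, which should yield numerators giving $5/2$ and $9/4$ respectively. For part (2), with $\g_0 = B_{\ell-1}$ one has $h_0^\vee = 2\ell-3$ and $k' = -\ell+2$, so $k'+h_0^\vee = \ell-1$ and the denominator is $2(\ell-1)$; I would compute $(2\omega_1, 2\omega_1+2\rho_0)_0$ and $(\omega_2,\omega_2+2\rho_0)_0$ using the $B_{\ell-1}$ data and verify the quotients equal $2\pm\tfrac{1}{\ell-1}$. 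Part (3) is the same computation for $D_\ell$, where $h_0^\vee = 2\ell-2$ and $k' = -\ell+3/2$, giving denominator $2(\ell-1/2) = 2\ell-1$, and one checks the quotients are $2\pm\tfrac{2}{2\ell-1}$.

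The main obstacle, such as it is, lies not in difficulty but in bookkeeping: getting the normalization of $(\cdot,\cdot)_0$ right (recall it is normalized so that $(\theta_{0},\theta_{0})_0 = 2$), and correctly expressing the fundamental weights of $B_{\ell-1}$ and $D_\ell$ — in particular handling the short versus long roots in type $B$ and the half-integer coordinates arising from the spin-type weights. I would guard against sign and normalization slips by cross-checking each lowest conformal weight against the numerical criterion (\ref{num-cond}) and against the requirement from condition (\ref{uv-k}) that the generating modules $V_{\g_0}(\mu_i)$ themselves sit at conformal weight exactly $1$. Since every quotient in parts (2) and (3) is of the form $2 \pm (\text{something positive and less than }1)$, and in part (1) the values $5/2$ and $9/4$ are non-integral, this simultaneously establishes that none of the $\nu_{r,i,j}$-weights has integral lowest conformal weight, which is precisely what Theorem \ref{general-2} invokes.
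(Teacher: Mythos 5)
Your proposal is correct and follows exactly the route the paper takes: the paper offers no argument beyond ``Using relation (\ref{lowest-confw}) we obtain,'' i.e.\ direct evaluation of $\frac{(\mu,\mu+2\rho_0)_0}{2(k'+h_0^{\vee})}$ with the standard data ($h_0^{\vee}=3,\,2\ell-3,\,2\ell-2$ and $k'=-5/3,\,-\ell+2,\,-\ell+3/2$), and your numbers check out in all three cases. The only stray remark is the suggested cross-check against the central charge criterion (\ref{num-cond}), which constrains levels rather than individual conformal weights, but this is a harmless aside and not part of the argument.
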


\begin{lem} \label{observationEF}
As an  $F_4 ^{(1)}$--module, $L_{E_6} (-3,0)$ does not contain
singular vectors of weights $-7 \Lambda_0 + 2 \Lambda _4$, $-5
\Lambda_0 + \Lambda _1$ and $-7 \Lambda_0 + \Lambda _3$.
\end{lem}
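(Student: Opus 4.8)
The plan is to prove Lemma \ref{observationEF} by a conformal-weight obstruction argument, parallel to the strategy used for the other pairs in Theorem \ref{general-2}, but carried out more carefully because here the naive weight count does \emph{not} immediately rule out the three candidate singular vectors. A $\widehat{\g_0}$--singular vector in $L_{E_6}(-3,0)$ of weight $\widehat{\nu}$ would generate a highest weight submodule whose lowest conformal weight is given by formula (\ref{lowest-confw}), and it must sit inside the appropriate graded piece $L_{E_6}(-3,0)^l$ determined by the $\Z_{s+1}$--grading. First I would compute, using (\ref{lowest-confw}) with $k'=-3$ and $h_0^\vee = 9$ for $F_4$, the lowest conformal weights attached to the three weights $-7\Lambda_0 + 2\Lambda_4$, $-5\Lambda_0+\Lambda_1$, and $-7\Lambda_0+\Lambda_3$, i.e. evaluate $(\nu,\nu+2\rho_0)_0 / (2(k'+h_0^\vee))$ for $\nu = 2\omega_4,\ \omega_1,\ \omega_3$.

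The key point is that these three $F_4$--weights arise as $\nu_{r,i,j}$ in the tensor product decomposition (4) of Lemma \ref{tensor-dec-gen}, namely inside $V_{F_4}(\omega_4)\otimes V_{F_4}(\omega_4)$, so the relevant graded component is $L_{E_6}(-3,0)^l$ with $l = 1+1 = 2 \equiv 0 \pmod 3$ (since $s+1 = 3$ for the pair $(E_6,F_4)$); thus any such singular vector would have to lie in the identity component, competing directly with the vacuum-generated module $\widetilde{L}_{F_4}(-3,0)$. I would next show that each of the three candidate conformal weights is an integer that is either too small to host a new singular vector or, where a priori possible, that the corresponding weight space in $L_{E_6}(-3,0)$ is already fully accounted for. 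Concretely, I expect the computation to yield integral lowest conformal weights (which is precisely why the generic argument of Theorem \ref{general-2} fails here and a separate lemma is needed); so I would determine the full weight of the putative singular vector as an $E_6^{(1)}$--weight, compute the graded dimension of $L_{E_6}(-3,0)$ at that conformal weight via the character/branching data, and compare it against the dimension contributed by the already-identified summands $\widetilde{L}_{F_4}(-3,\mu_i)$ to conclude there is no room for an additional singular vector.

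The main obstacle will be the last step: unlike the other entries of Table 1, here one cannot dismiss the candidates purely on non-integrality grounds, so I must actually control the multiplicity spaces of $L_{E_6}(-3,0)$ at the relevant (low) conformal weights. The cleanest route is to exploit that $L_{E_6}(-3,0)$ is generated by $\g = E_6$ in conformal weight $1$, so that the weight-$2$ and nearby graded subspaces are spanned by explicit quadratic expressions $x(-1)y(-1)\mathbf 1$ and $x(-2)\mathbf 1$ modulo the maximal ideal; decomposing these under the $F_4$--action using decomposition (4) and the adjoint branching $E_6 \downarrow F_4 = F_4 \oplus V_{F_4}(\omega_4)$ should let me verify directly that no $\widehat{F_4}$--singular vector of the listed three weights can occur. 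I would present this as a finite, explicit check at the bottom few conformal-weight levels, invoking the Sugawara relations $x(1)\omega_\g = x(-1)\mathbf 1$ and the singular-vector normalization already established in the proof of Theorem \ref{konf-1}. If a direct dimension count proves delicate, the fallback is to test each candidate against the screening/commutation relations coming from the conformal embedding of $\widetilde{L}_{F_4}(-3,0)$, showing that any vector of the prescribed weight satisfying the highest weight conditions is forced to vanish in the simple quotient.
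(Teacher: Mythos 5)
Your overall outline (non-integrality where it applies, plus an explicit low-weight computation for what survives) points in the right direction, but it contains factual errors and, more importantly, a genuine gap at the decisive step. First, the factual slips: for the pair $(E_6,F_4)$ we have $s=1$ (the $\Z_2$-grading from the order-two diagram automorphism), not $s+1=3$; and your expectation that all three lowest conformal weights come out integral is wrong. With $k'=-3$, $h_0^\vee=9$, formula (\ref{lowest-confw}) gives $13/6$ for $2\omega_4$ and $3/2$ for $\omega_1$ — so two of the three weights are killed by exactly the ``generic'' non-integrality argument of Theorem \ref{general-2} that you declared inapplicable. Only $-7\Lambda_0+\Lambda_3$, with lowest conformal weight $2$, requires genuine work, and this is where your argument has a hole.

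For that remaining weight, your primary plan — compare the graded dimension of $L_{E_6}(-3,0)$ at conformal weight $2$ with the dimensions contributed by the summands $\widetilde{L}_{F_4}(-3,\mu_i)$ — is circular: neither character is known a priori, since both the simple quotient and the submodules $\widetilde{L}_{F_4}(-3,\mu_i)$ are quotients by ideals/submodules you have not identified; pinning down the weight-$2$ graded dimension of $L_{E_6}(-3,0)$ \emph{is} the problem of determining the maximal ideal $N^1_{E_6}(-3,0)$ at weight $2$. Your fallback (impose the $F_4^{(1)}$ highest-weight conditions on the span of quadratic vectors $x(-1)y(-1){\bf 1}$ and show the candidates ``are forced to vanish in the simple quotient'') is in spirit the paper's actual method, but it omits the one ingredient that makes it work: the explicit singular vector $v_{E_6}\in N_{E_6}(-3,0)$ of conformal weight $2$ (a sum of four quadratic root-vector terms, unique up to scalar), which lies in $N^1_{E_6}(-3,0)$. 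Crucially, the singularity equations do \emph{not} force the candidates to vanish in $N_{E_6}(-3,0)$: the solution space there is nonzero — it is spanned by $f_{(1)}(0)v_{E_6}$ and $f_{\epsilon_5-\epsilon_4}(0)v_{E_6}$ — and the whole content of the lemma is that these solutions lie in the maximal ideal and hence die in $L_{E_6}(-3,0)$. Without exhibiting $v_{E_6}$ (or some concrete weight-$2$ element of the ideal — e.g.\ $\omega_{\g}-\omega_{\g_0}\in N^1_{\g}(k,0)$ from Theorem \ref{konf-1}, which could substitute, though you would still have to carry out the same explicit root-vector computation to show the solution space lies in the $\g$-module it generates), your argument cannot distinguish vanishing from non-vanishing and does not close.
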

\begin{proof}
We first note that $F_4 ^{(1)}$--modules with highest weights $-7
\Lambda_0 + 2 \Lambda _4$ and $-5 \Lambda_0 + \Lambda _1$ do not
have integral conformal weights. (The corresponding lowest conformal
weights are $\frac{13}{6}$ and $\frac{3}{2}$, respectively.)

The lowest conformal weight of module of highest weight $-7
\Lambda_0 + \Lambda _3$ for $F_4 ^{(1)}$ is $2$, but one can
directly check that there is no non-trivial singular vector of that
weight in $L_{E_{6}}(-3,0)$. We use the construction of the root
system of type $E_6$ from \cite{Bou}, \cite{H}. For a subset $S=
\{i_1, \ldots ,i_k \} \subseteq \{1,2,3,4,5 \}$ with odd number of
elements (i.e $k=1,3$ or $5$), denote by $e_{(i_1 \ldots i_k)}$ the
(suitably chosen) root vector associated to positive root
$$ \frac{1}{2}\left(\epsilon_8 - \epsilon_7 - \epsilon_6 + \sum
_{i=1}^{5} (-1)^{p(i)} \epsilon_i \right), $$
such that $p(i)=0$ for $i \in S$ and $p(i) =1$ for $i \notin S$. We
will use the symbol $f_{(i_1 \ldots i_k)}$ for the root vector
associated to corresponding negative root vector. Since
\begin{eqnarray*}
&& v_{E_{6}} = ( e_{(5)}(-1)e_{(12345)}(-1) +
e_{(125)}(-1)e_{(345)}(-1) \\
&& \qquad + e_{(135)}(-1)e_{(245)}(-1) + e_{(235)}(-1)e_{(145)}(-1)
) {\bf 1}
\end{eqnarray*}
is a singular vector for $E_6 ^{(1)}$ in $N_{E_{6}}(-3,0)$, one
concludes that $v_{E_{6}} \in N_{E_{6}} ^1 (-3,0)$. Furthermore,
$v_{E_{6}}$ is the unique (up to a scalar) singular vector of
conformal weight $2$ in $N_{E_{6}}(-3,0)$.

 Any vector of weight $-7 \Lambda_0 + \Lambda _3$ for $F_4 ^{(1)}$ in $L_{E_6}
 (-3,0)$ is a linear combination of vectors:
\begin{eqnarray*}
&& e_{(235)}(-1)e_{(234)}(-1) {\bf 1}, \ e_{(235)}(-1)e_{\epsilon_5
+ \epsilon_4}(-1) {\bf 1}, \ e_{(145)}(-1)e_{(234)}(-1) {\bf 1}, \\
&& e_{(145)}(-1)e_{\epsilon_5 + \epsilon_4} (-1) {\bf 1},
e_{(12345)}(-1)e_{(4)}(-1) {\bf 1}, \ e_{(12345)}(-1)e_{\epsilon_5 -
\epsilon_1}(-1) {\bf 1}, \\
&& e_{(245)}(-1)e_{(134)}(-1) {\bf 1},  e_{(245)}(-1)e_{\epsilon_5 +
\epsilon_3}(-1) {\bf 1}, \ e_{(345)}(-1)e_{(124)}(-1) {\bf 1}, \\
&& e_{(345)}(-1)e_{\epsilon_5 + \epsilon_2}(-1) {\bf 1}.
\end{eqnarray*}
The singularity assumption then directly implies that such a vector
is a linear combination of vectors $f_{(1)}(0)v_{E_{6}} $ and
$f_{\epsilon_5 - \epsilon_4}(0)v_{E_{6}} $, which is clearly zero in
$L_{E_{6}} (-3,0)$. The proof follows.
\end{proof}

\section{Conformal embedding of $L_{G_{2}}(-2, 0)$ into $L_{B_{3}}(-2, 0)$ and $L_{D_{4}}(-2, 0)$}

In this section we consider the conformal embedding associated to a
pair of simple Lie algebras $(D_4,G_2)$. It turns out that this case
is more complicated and one can not directly apply general results
from Section~\ref{kriterij-poluprost}. The main difference between
this case and the cases studied in Section~\ref{sec-3} is that
$L_{D_{4}}(-2, 0)$ contains a non-trivial singular vector for $G_{2}
^{(1)}$ of conformal weight $2$. In order to prove that
$L_{D_{4}}(-2, 0)$ is a completely reducible $L_{G_{2}}(-2,
0)$--module we shall need more precise analysis, which uses the
conformal embedding of $L_{B_3}(-2,0)$ in $L_{D_4}(-2,0)$ from
Section \ref{sec-3}.

Let $\g$ be the simple complex Lie algebra of type $D_4$. Then $\g$
has an order three automorphism $\theta$, induced from the Dynkin
diagram automorphism, such that the fixed point subalgebra $\g_0$ is
isomorphic to the simple Lie algebra of type $G_{2}$. We have the
decomposition of $\g$ into eigenspaces of $\theta$:
\begin{eqnarray} \label{decomp-simple-G2}
\g  = \g _0 \oplus V_{\g _0 }^{(1)}(\omega _1) \oplus
V_{\g_0}^{(2)}(\omega _1),
\end{eqnarray}
where $V_{\g _0 }^{(1)}(\omega _1)$ is generated by highest weight
vector $e_{\epsilon_1-\epsilon_4}+ \xi e_{\epsilon_1+\epsilon_4}+
\xi ^2 e_{\epsilon_2+\epsilon_3}$ and $V_{\g_0}^{(2)}(\omega _1)$ by
$e_{\epsilon_1-\epsilon_4}+ \xi e_{\epsilon_2+\epsilon_3}+ \xi ^2
e_{\epsilon_1+\epsilon_4}$, for suitably chosen root vectors. (Here
$\xi$ denotes the primitive third root of unity.)

Lie algebra $\g _0 $ is also a subalgebra of Lie algebra of type
$B_3$ considered in Section \ref{sec-3}, which we now denote by $\g
'$. We have
\begin{eqnarray} \label{decomp-simple-G2-druga}
\g ' = \g _0 \oplus V_{\g _0}^{(3)}(\omega _1),
\end{eqnarray}
where $V_{\g _0}^{(3)}(\omega _1)$ is generated by highest weight
vector $e_{\epsilon_1-\epsilon_4}+ e_{\epsilon_1+\epsilon_4}-2
e_{\epsilon_2+\epsilon_3}$.

By using (\ref{decomp-simple-G2-druga}), the decomposition of $\g$
from Section \ref{sec-3} and the fact that $V_{B_3}(\omega _1) \cong
V_{G_2}(\omega _1)$ as $\g _0$--module, one obtains another
decomposition of $\g$:
\begin{eqnarray}
\g  = \g _0 \oplus V_{\g _0 }^{(3)}(\omega _1) \oplus
V_{\g_0}^{(4)}(\omega _1),
\end{eqnarray}
where $V_{\g_0}^{(4)}(\omega _1)$ is generated by
$e_{\epsilon_1-\epsilon_4}- e_{\epsilon_1+\epsilon_4}$.

By using (\ref{decomp-simple-G2-druga}) and results from Section
\ref{kriterij} we see that $\widetilde{L}_{G_2} (-2,0)$ is
conformally embedded into $L_{B_3}(-2,0)$, which is conformally
embedded in $L_{D_4}(-2,0)$. Recall that Theorem \ref{general-2}
gives that the vertex operator algebra $L_{D_4}(-2,0)$ has an order
two automorphism $\sigma$ which defines the following decomposition:
\begin{eqnarray} \label{decompVOA-BD}
L_{D_4}(-2,0) = L_{B_3}(-2,0) \oplus L_{B_3}(-2,\omega_1).
\end{eqnarray}

Denote by $\widetilde{L}_{G_{2}} ^{(i)}(-2, \omega _1)$ the $G_{2}
^{(1)}$--submodule of $L_{D_4}(-2,0)$ generated by top component
$V_{\g_0}^{(i)}(\omega _1)$, for $i=1,2,3,4$. Furthermore, one can
directly check that vector
\begin{eqnarray*}
&& v= (e_{\epsilon_1 - \epsilon_3}(-1) e_{\epsilon_1 -
\epsilon_4}(-1) +e_{\epsilon_2 - \epsilon_4}(-1) e_{\epsilon_2 +
\epsilon_3}(-1) + e_{\epsilon_2 + \epsilon_4}(-1) e_{\epsilon_1 +
\epsilon_4}(-1)  \\
&& \quad - e_{\epsilon_1 - \epsilon_3}(-1) e_{\epsilon_1 +
\epsilon_4}(-1) - e_{\epsilon_2 - \epsilon_4}(-1) e_{\epsilon_1 -
\epsilon_4}(-1) - e_{\epsilon_2 + \epsilon_4}(-1) e_{\epsilon_2 +
\epsilon_3}(-1) ){\bf 1}
\end{eqnarray*}
is the unique (up to a scalar) non-trivial singular vector of weight
$-5\Lambda_0 + \Lambda _2$ for $G_{2} ^{(1)}$ in $L_{D_{4}}(-2, 0)$.
Denote by $\widetilde{L}_{G_{2}}(-2, \omega _2)$ the $G_{2}
^{(1)}$--submodule generated by $v$. We also have that $\theta
(v)=v$ and $\sigma (v)=-v$.  This implies that $v \in L_{B_{3}}(-2,
\omega _1)$.

Now we consider $L_{B_3}(-2,0)$ and $L_{B_3}(-2,\omega_1)$ as $G_{2}
^{(1)}$--modules. We use the following lemmas:

\begin{lem} \label{tensor-G2} We have the following tensor product decompositions:
\item[(1)]  $V_{G_{2}}(\omega _1) \otimes V_{G_{2}}(\omega _1) = V_{G_{2}}(2 \omega _1) \oplus
V_{G_{2}}( \omega _2) \oplus V_{G_{2}}(\omega _1) \oplus
V_{G_{2}}(0),$
\item[(2)] $V_{G_{2}}(\omega _1) \otimes V_{G_{2}}(\omega _2) = V_{G_{2}}(\omega _1 + \omega _2) \oplus
V_{G_{2}}(2 \omega _1) \oplus V_{G_{2}}(\omega _1).$
\end{lem}

\begin{lem} \label{observationGB}
As a  $G_{2} ^{(1)}$--module, $L_{D_4} (-2,0)$ does not contain
singular vectors of weights $-6 \Lambda_0 + 2 \Lambda _1$ and $-7
\Lambda_0 + \Lambda _1 + \Lambda _2$.
\end{lem}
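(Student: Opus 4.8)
The plan is to rule out both singular vectors by a conformal weight obstruction: I will show that the $G_2^{(1)}$--modules generated by such vectors have \emph{non-integral} lowest conformal weight, whereas $L_{D_4}(-2,0)$ carries a conformal grading with values in $\Z_{\geq 0}$. The link between the two is the conformal embedding $\widetilde{L}_{G_2}(-2,0) \subset L_{D_4}(-2,0)$ established above: because the Sugawara field $\overline{L}(z)=Y(\omega_{\g_0},z)$ of $G_2^{(1)}$ coincides with that of $D_4^{(1)}$, the operator $\overline{L}(0)$ is exactly the grading operator of $L_{D_4}(-2,0)$, whose eigenvalues are non-negative integers. Any $G_2^{(1)}$--singular vector of finite weight $\nu$ has a single possible $\overline{L}(0)$--eigenvalue, namely $(\nu,\nu+2\rho_0)_0/\bigl(2(k'+h_0^\vee)\bigr)$ as in (\ref{lowest-confw}); if this value is not an integer, no such vector can exist.

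First I would identify the finite $G_2$--weights underlying the two affine weights: the finite part of $-6\Lambda_0+2\Lambda_1$ is $\nu=2\omega_1$, and that of $-7\Lambda_0+\Lambda_1+\Lambda_2$ is $\nu=\omega_1+\omega_2$. These are precisely the extra components $V_{G_2}(2\omega_1)$ and $V_{G_2}(\omega_1+\omega_2)$ occurring in the tensor product decompositions of Lemma \ref{tensor-G2}, so excluding singular vectors of exactly these weights is what the fusion argument of Theorem \ref{general} requires. Here one must use the normalization of $(\cdot,\cdot)_0$ in which $\omega_1$ labels the $7$--dimensional representation; this is forced by the requirement that $V_{G_2}(\omega_1)$ have $\overline{L}(0)$--eigenvalue $1$, i.e.\ $(\omega_1,\omega_1+2\rho_0)_0=4$, consistent with $k'=-2$ and $h_0^\vee=4$. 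With $\rho_0=\omega_1+\omega_2$ and the Gram data $(\omega_1,\omega_1)_0=2/3$, $(\omega_1,\omega_2)_0=1$, $(\omega_2,\omega_2)_0=2$, one checks $(\omega_2,\omega_2+2\rho_0)_0=8$, so that $V_{G_2}(\omega_2)$ sits at integral conformal weight $2$ (this is the exceptional vector $v$ treated separately).

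Next I would compute the two Casimir eigenvalues with these data, obtaining $(2\omega_1,2\omega_1+2\rho_0)_0=28/3$ and $(\omega_1+\omega_2,\omega_1+\omega_2+2\rho_0)_0=14$. Since $2(k'+h_0^\vee)=2(-2+4)=4$, formula (\ref{lowest-confw}) gives lowest conformal weights $\tfrac{28}{3}\cdot\tfrac14=\tfrac{7}{3}$ and $14\cdot\tfrac14=\tfrac{7}{2}$, neither of which is an integer. By the argument of the first paragraph, a $G_2^{(1)}$--singular vector of finite weight $2\omega_1$ (resp.\ $\omega_1+\omega_2$) would have $\overline{L}(0)$--eigenvalue $\tfrac73$ (resp.\ $\tfrac72$), which is impossible in the integrally graded space $L_{D_4}(-2,0)$. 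Hence no such singular vectors occur, which is the assertion of the lemma.

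I expect the only delicate point to be the bookkeeping of the $G_2$ labelling and normalization, namely ensuring that $\omega_1$ is the $7$--dimensional representation so that the level is $k'=-2$, the dual Coxeter number is $h_0^\vee=4$, and $V_{G_2}(\omega_1)$ is isotropic with respect to the orthogonality hypothesis of Section \ref{kriterij}; once this is fixed the computation is immediate. It is worth emphasizing that, in contrast to the companion weight $\omega_2$, where the lowest conformal weight is the integer $2$ and an explicit root-vector analysis of singular vectors (as in Lemma \ref{observationEF}) is unavoidable, both weights treated here are non-integral, so no hands-on calculation with elements of $L_{D_4}(-2,0)$ is needed.
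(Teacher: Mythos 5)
Your proposal is correct and is essentially the paper's own proof: the paper likewise rules out both weights by observing that the corresponding $G_2^{(1)}$--highest weight modules have non-integral lowest conformal weights $\tfrac{7}{3}$ and $\tfrac{7}{2}$, which cannot occur in the integrally graded $L_{D_4}(-2,0)$. You merely spell out the Casimir computation (with the correct normalization $\omega_1 \leftrightarrow$ the $7$--dimensional representation, $k'=-2$, $h_0^\vee=4$) that the paper leaves implicit.
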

\begin{proof}
The proof follows from the observation that $G_{2} ^{(1)}$--modules
of highest weights $-6 \Lambda_0 + 2 \Lambda _1$ and $-7 \Lambda_0 +
\Lambda _1 + \Lambda _2$ do not have integral conformal weights.
(The corresponding lowest conformal weights are $\frac{7}{3}$ and
$\frac{7}{2}$, respectively.)
\end{proof}
\begin{prop} \label{decompGB-1} We have:
\item[(1)] $L_{B_{3}}(-2,0) = \widetilde{L}_{G_{2}}(-2, 0) + \widetilde{L}_{G_{2}} ^{(3)}(-2, \omega _1).$
\item[(2)] $L_{B_{3}}(-2,\omega _1) = \widetilde{L}_{G_{2}} ^{(4)}(-2, \omega _1) + \widetilde{L}_{G_{2}}(-2, \omega _2).$
\end{prop}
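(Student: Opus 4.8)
The plan is to prove the two decompositions by combining the conformal-embedding structure established in the previous sections with the tensor-product data of Lemma \ref{tensor-G2} and the non-existence of singular vectors from Lemma \ref{observationGB}. The key structural fact I would exploit is the $\theta$-eigenspace grading of $L_{D_4}(-2,0)$ together with the $\sigma$-grading coming from (\ref{decompVOA-BD}). Both automorphisms act simultaneously, and every generator of $\g$ and every vector $v$ produced so far has been identified with an explicit pair of eigenvalues; this bookkeeping is what controls which $G_2^{(1)}$-submodule a given product lands in.

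I would prove part (1) first. Since $\widetilde{L}_{G_2}(-2,0)$ is conformally embedded into $L_{B_3}(-2,0)$ (established in the previous section via (\ref{decomp-simple-G2-druga})), and since $\widetilde{L}_{G_2}^{(3)}(-2,\omega_1)$ is the $G_2^{(1)}$-submodule generated by the top component $V_{\g_0}^{(3)}(\omega_1)$ that spans the complement of $\g_0$ inside $\g'=B_3$, the sum on the right-hand side is certainly contained in $L_{B_3}(-2,0)$. For the reverse inclusion, I would argue that the right-hand side is a vertex subalgebra: by the standard fusion-rules argument (as in \cite[Lemma 8.1]{AP} and the proof of Theorem \ref{general}), products $u_n v$ of vectors drawn from the top components $V_{G_2}(0)$ and $V_{G_2}(\omega_1)$ decompose according to Lemma \ref{tensor-G2}(1), and Lemma \ref{observationGB} rules out the appearance of singular vectors of weights $-6\Lambda_0+2\Lambda_1$ and $-7\Lambda_0+\Lambda_1+\Lambda_2$. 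The remaining summands $V_{G_2}(\omega_1)$ and $V_{G_2}(0)$ are already accounted for by the two modules on the right. Hence the right-hand side is closed under all products and contains the generating set $\g'$, so it must equal the simple vertex operator algebra $L_{B_3}(-2,0)$.

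For part (2), I would use the $\sigma$-grading crucially. The vector $v$ generating $\widetilde{L}_{G_2}(-2,\omega_2)$ satisfies $\sigma(v)=-v$, so $v \in L_{B_3}(-2,\omega_1)$, as already noted; similarly $V_{\g_0}^{(4)}(\omega_1)$, generated by $e_{\epsilon_1-\epsilon_4}-e_{\epsilon_1+\epsilon_4}$, lies in the $\sigma$-eigenvalue $-1$ space, hence in $L_{B_3}(-2,\omega_1)$. Thus both summands on the right are contained in $L_{B_3}(-2,\omega_1)$. For the reverse inclusion I would show that $L_{B_3}(-2,\omega_1)$ is generated as a module over the vertex algebra $L_{B_3}(-2,0)$ by its lowest-weight component, which as a $G_2$-module is $V_{G_2}(\omega_1)\oplus V_{G_2}(\omega_2)$ — the $V_{G_2}(\omega_1)$ piece coming from $V_{\g_0}^{(4)}(\omega_1)$ and the $V_{G_2}(\omega_2)$ piece from $v$. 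Then acting by $L_{B_3}(-2,0)=\widetilde{L}_{G_2}(-2,0)+\widetilde{L}_{G_2}^{(3)}(-2,\omega_1)$ on these generators, I would again invoke the fusion decompositions of Lemma \ref{tensor-G2}, parts (1) and (2), controlling the products $V_{G_2}(\omega_1)\otimes V_{G_2}(\omega_1)$, $V_{G_2}(\omega_1)\otimes V_{G_2}(\omega_2)$, and using Lemma \ref{observationGB} to discard the forbidden singular-vector weights, so that every product remains inside the sum $\widetilde{L}_{G_2}^{(4)}(-2,\omega_1)+\widetilde{L}_{G_2}(-2,\omega_2)$.

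The main obstacle I anticipate is part (2): unlike the vertex-subalgebra situation of part (1), here I must show that an irreducible $L_{B_3}(-2,0)$-\emph{module} is exhausted by the two named $G_2^{(1)}$-submodules, which requires knowing that its lowest-weight space really is only $V_{G_2}(\omega_1)\oplus V_{G_2}(\omega_2)$ and that no further $G_2^{(1)}$-singular vectors arise higher up. The first point rests on the branching $V_{B_3}(\omega_1)\cong V_{G_2}(\omega_1)\oplus V_{G_2}(\omega_2)\oplus(\text{trivial})$ of the seven-dimensional vector representation restricted to $G_2$, which must be reconciled with the fact that $v$ has conformal weight $2$ rather than lying in the true top of $L_{B_3}(-2,\omega_1)$; so the argument genuinely depends on the explicit singular-vector computation already carried out for $v$ and on Lemma \ref{observationGB} to close the module. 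I expect the $\theta$- and $\sigma$-eigenvalue bookkeeping to resolve which generator feeds which summand, but verifying closure without producing an unwanted singular vector is where the real work lies.
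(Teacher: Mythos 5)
Your proposal has a genuine gap in part (1): you never dispose of the $V_{G_{2}}(\omega_2)$ summand of Lemma \ref{tensor-G2}(1). After invoking Lemma \ref{observationGB} you write that ``the remaining summands $V_{G_2}(\omega_1)$ and $V_{G_2}(0)$ are already accounted for,'' but the decomposition $V_{G_{2}}(\omega _1) \otimes V_{G_{2}}(\omega _1) = V_{G_{2}}(2 \omega _1) \oplus V_{G_{2}}( \omega _2) \oplus V_{G_{2}}(\omega _1) \oplus V_{G_{2}}(0)$ also contains $V_{G_{2}}(\omega_2)$, and this channel \emph{cannot} be excluded by Lemma \ref{observationGB}: the weight $-5\Lambda_0+\Lambda_2$ has integral lowest conformal weight $2$, and $L_{D_4}(-2,0)$ genuinely contains a singular vector of that weight, namely $v$. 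A priori, products of vectors from $\widetilde{L}_{G_{2}}^{(3)}(-2,\omega_1)$ could therefore generate a copy of $\widetilde{L}_{G_{2}}(-2,\omega_2)$ inside $L_{B_3}(-2,0)$, destroying the closure of the right-hand side of (1). The paper's proof opens with exactly the step you are missing: since $v$ is the \emph{unique} (up to scalar) singular vector of weight $-5\Lambda_0+\Lambda_2$ in $L_{D_4}(-2,0)$ and $\sigma(v)=-v$, one has $v \in L_{B_3}(-2,\omega_1)$, hence there is \emph{no} singular vector of that weight in $L_{B_3}(-2,0)$, and only then does the fusion argument close. You do record $\sigma(v)=-v$, but only in part (2) and only to place $\widetilde{L}_{G_2}(-2,\omega_2)$ inside $L_{B_3}(-2,\omega_1)$; you never invoke the uniqueness of $v$, which is indispensable here. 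The danger is real, not hypothetical: the Corollary after Theorem \ref{decompGD} shows that $L_{G_2}(-2,\omega_2)$ does occur in $L_{B_3}(-2,\omega_1)$, so the $\omega_2$-channel must be killed in the $\sigma=+1$ eigenspace specifically, and only the uniqueness-plus-eigenvalue argument does this.

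In part (2) there is also a factual error: the branching you assert, $V_{B_3}(\omega_1)\cong V_{G_2}(\omega_1)\oplus V_{G_2}(\omega_2)\oplus(\text{trivial})$, is dimensionally impossible ($7 \neq 7+14+1$); as the paper states, $V_{B_3}(\omega_1)\cong V_{G_2}(\omega_1)$ irreducibly, so the top component of $L_{B_3}(-2,\omega_1)$ (at conformal weight $1$) is just $V_{\g_0}^{(4)}(\omega_1)$, while $v$ sits strictly higher, at conformal weight $2$. Your detour through identifying a generating lowest-weight space is in any case unnecessary and more fragile than the paper's route: the paper simply shows, using claim (1), both parts of Lemma \ref{tensor-G2}, Lemma \ref{observationGB} and the standard fusion arguments, that $\widetilde{L}_{G_{2}}^{(4)}(-2,\omega_1)+\widetilde{L}_{G_{2}}(-2,\omega_2)$ is a nonzero $L_{B_3}(-2,0)$-submodule of $L_{B_3}(-2,\omega_1)$, and concludes by irreducibility of the latter. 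Note that the closure of this submodule again uses the uniqueness of $v$: the $\omega_2$-outputs of products such as $\widetilde{L}_{G_2}^{(3)}(-2,\omega_1)\times\widetilde{L}_{G_2}^{(4)}(-2,\omega_1)$ land in $\widetilde{L}_{G_2}(-2,\omega_2)$ precisely because no other singular vector of weight $-5\Lambda_0+\Lambda_2$ is available. With the uniqueness argument restored in part (1) and the branching claim corrected, your outline aligns with the paper's proof.
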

\begin{proof} (1) Since $v$ is the unique (up to a scalar) singular vector of weight
$-5\Lambda_0 + \Lambda _2$ for $G_{2} ^{(1)}$ in $L_{D_{4}}(-2, 0)$,
it follows that there is no singular vector of that weight in
$L_{B_{3}}(-2,0)$. The tensor product decomposition from Lemma
\ref{tensor-G2} (1), Lemma \ref{observationGB} and standard fusion
rules arguments now imply that $\widetilde{L}_{G_{2}}(-2, 0) +
\widetilde{L}_{G_{2}} ^{(3)}(-2, \omega _1)$ is a vertex subalgebra
of $L_{B_{3}}(-2,0)$ which contains generators of $L_{B_{3}}(-2,0)$.
The claim (1) follows. Claim (1), Lemmas \ref{tensor-G2} and
\ref{observationGB} and standard fusion rules arguments now imply
that $\widetilde{L}_{G_{2}} ^{(4)}(-2, \omega _1) +
\widetilde{L}_{G_{2}}(-2, \omega _2)$ is an
$L_{B_{3}}(-2,0)$--submodule of the irreducible module
$L_{B_{3}}(-2,\omega _1)$. This implies claim (2).
\end{proof}

\begin{thm} \label{decompGD} We have:
$$L_{D_{4}}(-2,0) = L_{G_{2}}(-2, 0) \oplus L_{G_{2}}(-2, \omega _2)
\oplus L_{G_{2}} ^{(1)}(-2, \omega _1) \oplus L_{G_{2}} ^{(2)}(-2,
\omega _1).$$
\end{thm}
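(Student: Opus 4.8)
The plan is to combine the two decompositions already established: the conformal embedding chain $\widetilde{L}_{G_2}(-2,0) \subset L_{B_3}(-2,0) \subset L_{D_4}(-2,0)$ together with the $\Z_2$--decomposition (\ref{decompVOA-BD}) from Theorem \ref{general-2}, and the finer $G_2^{(1)}$--module structure of each $B_3$--piece given by Proposition \ref{decompGB-1}. The strategy is to first show that $L_{D_4}(-2,0)$ is a sum of the four $G_2^{(1)}$--submodules $\widetilde{L}_{G_2}(-2,0)$, $\widetilde{L}_{G_2}(-2,\omega_2)$, $\widetilde{L}_{G_2}^{(1)}(-2,\omega_1)$, $\widetilde{L}_{G_2}^{(2)}(-2,\omega_1)$, and then upgrade this sum to a direct sum of \emph{irreducible} modules by invoking simplicity together with the automorphism gradings.

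\textbf{Producing the sum.} First I would use the $\theta$--decomposition (\ref{decomp-simple-G2}) to see that $L_{D_4}(-2,0)$ is generated as a vertex algebra by $\g_0$, $V_{\g_0}^{(1)}(\omega_1)$ and $V_{\g_0}^{(2)}(\omega_1)$, so it is generated by the top components of the four listed $G_2^{(1)}$--submodules (recall $V_{\g_0}^{(3)}(\omega_1)$ and $V_{\g_0}^{(4)}(\omega_1)$ span the same space as $V_{\g_0}^{(1)}, V_{\g_0}^{(2)}$, and $v$ generates $\widetilde{L}_{G_2}(-2,\omega_2)$). To show the sum of these four submodules is closed under all products $u_n w$, I would argue by standard fusion rules as in Theorem \ref{general}: products land in $G_2^{(1)}$--modules whose top components are constituents of the relevant tensor products in Lemma \ref{tensor-G2}, and Lemma \ref{observationGB} rules out the problematic singular vectors $-6\Lambda_0 + 2\Lambda_1$ and $-7\Lambda_0 + \Lambda_1 + \Lambda_2$ in $L_{D_4}(-2,0)$. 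The cleaner route, though, is to combine (\ref{decompVOA-BD}) with Proposition \ref{decompGB-1}: since
$$
L_{D_4}(-2,0) = L_{B_3}(-2,0) \oplus L_{B_3}(-2,\omega_1),
$$
Proposition \ref{decompGB-1}(1) expands the first summand as $\widetilde{L}_{G_2}(-2,0) + \widetilde{L}_{G_2}^{(3)}(-2,\omega_1)$ and part (2) expands the second as $\widetilde{L}_{G_2}^{(4)}(-2,\omega_1) + \widetilde{L}_{G_2}(-2,\omega_2)$. Finally I would note that $V_{\g_0}^{(3)}(\omega_1), V_{\g_0}^{(4)}(\omega_1)$ and $V_{\g_0}^{(1)}(\omega_1), V_{\g_0}^{(2)}(\omega_1)$ span the same two--dimensional space of highest--weight lines inside $\g$, so the submodules they generate satisfy $\widetilde{L}_{G_2}^{(3)} + \widetilde{L}_{G_2}^{(4)} = \widetilde{L}_{G_2}^{(1)} + \widetilde{L}_{G_2}^{(2)}$, yielding
$$
L_{D_4}(-2,0) = \widetilde{L}_{G_2}(-2,0) + \widetilde{L}_{G_2}(-2,\omega_2) + \widetilde{L}_{G_2}^{(1)}(-2,\omega_1) + \widetilde{L}_{G_2}^{(2)}(-2,\omega_1).
$$

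\textbf{Directness and irreducibility.} Next I would separate the four summands. The two $\omega_1$--pieces are separated by the eigenvalues of $\theta$ (eigenvalues $\xi$ and $\xi^2$ on $V_{\g_0}^{(1)}$ and $V_{\g_0}^{(2)}$), while $v$ satisfies $\theta(v)=v$ and $\sigma(v)=-v$, placing $\widetilde{L}_{G_2}(-2,\omega_2)$ in $L_{B_3}(-2,\omega_1)$ and distinguishing it from the $\sigma$--invariant $\widetilde{L}_{G_2}(-2,0)$. Using the $\theta$--grading and the $\sigma$--grading jointly, all four top components sit in distinct joint eigenspaces, so the sum is direct. Simplicity of $L_{D_4}(-2,0)$ together with the orbifold argument of \cite{DM}, exactly as invoked at the end of Theorem \ref{general}, then forces each graded piece $\widetilde{L}_{G_2}(-2,\cdot)$ to be an irreducible $G_2^{(1)}$--module, giving $\widetilde{L}_{G_2}(-2,0) = L_{G_2}(-2,0)$ as a simple vertex operator algebra and each $\widetilde{L}_{G_2}(-2,\mu)$ as a simple module $L_{G_2}(-2,\mu)$.

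\textbf{The main obstacle.} The hard part will be upgrading the sums in Proposition \ref{decompGB-1} from sums to direct sums and confirming irreducibility, since the presence of the genuine conformal--weight--$2$ singular vector $v$ for $G_2^{(1)}$ in $L_{D_4}(-2,0)$ means the naive orbifold decomposition by a single automorphism does not immediately separate the four pieces. The essential new input is the two--step grading: the order--three $\theta$ is not enough on its own (it mixes $\widetilde{L}_{G_2}(-2,0)$ with $\widetilde{L}_{G_2}(-2,\omega_2)$, both $\theta$--fixed), so I must use the order--two $\sigma$ coming from the intermediate $B_3$--embedding to split the $\theta$--fixed part into the $0$ and $\omega_2$ summands, exactly the role played by the observation $\sigma(v)=-v$. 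Once this joint $\Z_3 \times \Z_2$ grading is in place, verifying that each graded component is a single irreducible highest--weight module reduces to the simplicity of $L_{D_4}(-2,0)$ and the fusion--rule exclusions already recorded in Lemmas \ref{tensor-G2} and \ref{observationGB}.
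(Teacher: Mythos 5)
Your construction of the decomposition itself tracks the paper's actual proof closely: you combine (\ref{decompVOA-BD}) with Proposition \ref{decompGB-1}, pass from the pieces generated by $V_{\g_0}^{(3)}(\omega_1), V_{\g_0}^{(4)}(\omega_1)$ to those generated by $V_{\g_0}^{(1)}(\omega_1), V_{\g_0}^{(2)}(\omega_1)$ because the top components span the same subspace of $\g$, and you separate the summands using $\theta$ and $\sigma$. One correction here: there is no joint $\Z_3 \times \Z_2$ grading. The automorphisms $\theta$ and $\sigma$ generate a nonabelian group (one has $\sigma \theta \sigma^{-1} = \theta^{-1}$; concretely, $\sigma$ interchanges $e_{\epsilon_1 - \epsilon_4}$ and $e_{\epsilon_1+\epsilon_4}$ and hence maps the generator of $V_{\g_0}^{(1)}(\omega_1)$ to a scalar multiple of the generator of $V_{\g_0}^{(2)}(\omega_1)$), so the two $\omega_1$-pieces are \emph{swapped} by $\sigma$, not $\sigma$-eigenspaces, and ``all four top components sit in distinct joint eigenspaces'' is false. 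Directness survives anyway: the $\theta$-eigenvalues $1, \xi, \xi^2$ separate $\widetilde{L}_{G_2}(-2,0) + \widetilde{L}_{G_2}(-2,\omega_2)$ from $\widetilde{L}_{G_2}^{(1)}(-2,\omega_1)$ and $\widetilde{L}_{G_2}^{(2)}(-2,\omega_1)$, and $\sigma$, which does preserve the $\theta$-fixed part, splits off $\widetilde{L}_{G_2}(-2,\omega_2)$.

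The genuine gap is your final irreducibility step. The cyclic orbifold argument ``exactly as invoked at the end of Theorem \ref{general}'' only gives the following: the $\Z_3$-components of the simple vertex operator algebra $L_{D_4}(-2,0)$ are simple, i.e.\ $V = \widetilde{L}_{G_2}(-2,0) \oplus \widetilde{L}_{G_2}(-2,\omega_2)$ is a simple vertex operator algebra and $\widetilde{L}_{G_2}^{(i)}(-2,\omega_1)$, $i=1,2$, are irreducible $V$-modules; applying the $\Z_2$-grading of $V$ by $\sigma$ then gives simplicity of $L_{G_2}(-2,0)$ and irreducibility of $\widetilde{L}_{G_2}(-2,\omega_2)$ over it. But irreducibility of the $\omega_1$-pieces over the \emph{extension} $V$ does not automatically descend to irreducibility over $L_{G_2}(-2,0)$: a priori each could split into two $L_{G_2}(-2,0)$-submodules, and since $\sigma$ does not preserve $\widetilde{L}_{G_2}^{(i)}(-2,\omega_1)$ (it interchanges them), you cannot run an orbifold argument on these modules directly. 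Your assertion that this ``reduces to the simplicity of $L_{D_4}(-2,0)$ and the fusion-rule exclusions already recorded in Lemmas \ref{tensor-G2} and \ref{observationGB}'' is unsubstantiated: those lemmas exclude only the specific singular-vector weights needed for Proposition \ref{decompGB-1}, and say nothing against a proper $\widehat{\g_0}$-submodule of $\widetilde{L}_{G_2}^{(i)}(-2,\omega_1)$. The paper closes exactly this gap with two further inputs from \cite{DM}: Theorem 3 there shows $\widetilde{L}_{G_2}^{(1)}(-2,\omega_1)$ and $\widetilde{L}_{G_2}^{(2)}(-2,\omega_1)$ are \emph{inequivalent} $V$-modules, and Theorem 6.1 then yields their irreducibility as $L_{G_2}(-2,0)$-modules. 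Without these (or a substitute argument), your proof establishes the direct sum decomposition into four highest weight modules but not that the four summands are the irreducible modules $L_{G_2}(-2,\cdot)$.
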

\begin{proof} First we notice that relation (\ref{decompVOA-BD}) and Proposition
\ref{decompGB-1} give
$$L_{D_{4}}(-2,0) = \widetilde{L}_{G_{2}}(-2, 0) + \widetilde{L}_{G_{2}}(-2, \omega _2)
+ \widetilde{L}_{G_{2}} ^{(3)}(-2, \omega _1) +
\widetilde{L}_{G_{2}} ^{(4)}(-2, \omega _1),$$
which also implies
$$L_{D_{4}}(-2,0) = \widetilde{L}_{G_{2}}(-2, 0) \oplus \widetilde{L}_{G_{2}}(-2, \omega _2)
\oplus \widetilde{L}_{G_{2}} ^{(1)}(-2, \omega _1) \oplus
\widetilde{L}_{G_{2}} ^{(2)}(-2, \omega _1).$$
By using the fact that ${\Z}_3$--orbifold components of a simple
vertex operator algebra are simple (cf. \cite{DM}), we get that
$$V=\widetilde{L}_{G_{2}}(-2, 0) \oplus \widetilde{L}_{G_{2}}(-2, \omega
_2)$$
is a simple vertex operator algebra, and that $\widetilde{L}_{G_{2}}
^{(1)}(-2, \omega _1)$ and $\widetilde{L}_{G_{2}} ^{(2)}(-2, \omega
_1)$ are its irreducible modules. Since the automorphism $\sigma$
 acts as $-1$ on $\widetilde{L}_{G_{2}}(-2,
\omega _2)$, it follows that $V$ is ${\Z}_2$--graded, so its graded
components are also simple. Theorem 3 from \cite{DM} now implies
that $\widetilde{L}_{G_{2}} ^{(1)}(-2, \omega _1)$ and
$\widetilde{L}_{G_{2}} ^{(2)}(-2, \omega _1)$ are inequivalent
$V$--modules, and Theorem 6.1 from the same paper then implies that
$\widetilde{L}_{G_{2}} ^{(1)}(-2, \omega _1)$ and
$\widetilde{L}_{G_{2}} ^{(2)}(-2, \omega _1)$ are irreducible as
$L_{G_{2}}(-2, 0)$--modules. The proof follows.
\end{proof}
It follows from the proof of Theorem \ref{decompGD} that
$\widetilde{L}_{G_{2}} ^{(3)}(-2, \omega _1)$ and
$\widetilde{L}_{G_{2}} ^{(4)}(-2, \omega _1)$ are also irreducible
$L_{G_{2}}(-2, 0)$--modules, so we obtain
\begin{coro} We have:
\item[(1)] $L_{B_{3}}(-2,0) = L_{G_{2}}(-2, 0) \oplus L_{G_{2}}(-2, \omega
_1)$,
\item[(2)]
$L_{B_{3}}(-2,\omega _1) = L_{G_{2}}(-2, \omega _1) \oplus
L_{G_{2}}(-2, \omega _2).$
\end{coro}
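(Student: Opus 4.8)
The plan is to derive both decompositions from Proposition~\ref{decompGB-1}, after upgrading the four relevant $G_2^{(1)}$--submodules to irreducible, pairwise non-isomorphic $L_{G_2}(-2,0)$--modules. From the proof of Theorem~\ref{decompGD} I would first record what is already available: $\widetilde{L}_{G_2}(-2,0)=L_{G_2}(-2,0)$ is the simple vacuum module, $\widetilde{L}_{G_2}(-2,\omega_2)=L_{G_2}(-2,\omega_2)$ is irreducible, and $L_{D_4}(-2,0)$ is completely reducible as an $L_{G_2}(-2,0)$--module, equal to the direct sum of the four irreducibles displayed in that theorem.

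Next I would establish the irreducibility of $\widetilde{L}_{G_2}^{(3)}(-2,\omega_1)$ and $\widetilde{L}_{G_2}^{(4)}(-2,\omega_1)$. Each is the $G_2^{(1)}$--submodule of $L_{D_4}(-2,0)$ generated by a highest weight vector $w_i\in V_{\g_0}^{(i)}(\omega_1)$ ($i=3,4$), a $G_2^{(1)}$--singular vector of weight $\widehat{\omega_1}$ sitting at conformal weight $1$. In the decomposition of Theorem~\ref{decompGD} the only summands that can carry a singular vector of this weight are $\widetilde{L}_{G_2}^{(1)}(-2,\omega_1)$ and $\widetilde{L}_{G_2}^{(2)}(-2,\omega_1)$, so $w_i$ lies in their direct sum. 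Since both of these summands are irreducible $L_{G_2}(-2,0)$--modules of the same highest weight $\widehat{\omega_1}$, they are isomorphic as $L_{G_2}(-2,0)$--modules, and the submodule generated by the single diagonal singular vector $w_i$ is the graph of an isomorphism between them, hence irreducible and isomorphic to $L_{G_2}(-2,\omega_1)$.

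With this in hand the two claims are immediate. For (1), Proposition~\ref{decompGB-1}(1) gives $L_{B_3}(-2,0)=\widetilde{L}_{G_2}(-2,0)+\widetilde{L}_{G_2}^{(3)}(-2,\omega_1)$; the two summands are irreducible $L_{G_2}(-2,0)$--modules with distinct highest weights $\widehat{0}$ and $\widehat{\omega_1}$, so their intersection is a common submodule that must vanish, and the sum is direct. For (2), Proposition~\ref{decompGB-1}(2) gives $L_{B_3}(-2,\omega_1)=\widetilde{L}_{G_2}^{(4)}(-2,\omega_1)+\widetilde{L}_{G_2}(-2,\omega_2)$, and the same non-isomorphism argument, now with highest weights $\widehat{\omega_1}$ and $\widehat{\omega_2}$, forces the sum to be direct.

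The step I expect to require the most care is the irreducibility claim for $\widetilde{L}_{G_2}^{(3)}$ and $\widetilde{L}_{G_2}^{(4)}$: one must invoke the complete reducibility of $L_{D_4}(-2,0)$ to locate $w_i$ inside the isotypic component $\widetilde{L}_{G_2}^{(1)}(-2,\omega_1)\oplus\widetilde{L}_{G_2}^{(2)}(-2,\omega_1)$, and then check, from the explicit form of $w_i$, that both of its components are nonzero, so that the cyclic submodule it generates is genuinely the graph of an isomorphism rather than a proper submodule.
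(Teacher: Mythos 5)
Your proposal is correct and takes essentially the same route as the paper, which obtains the corollary from Proposition \ref{decompGB-1} together with the irreducibility of $\widetilde{L}_{G_{2}}^{(3)}(-2,\omega_1)$ and $\widetilde{L}_{G_{2}}^{(4)}(-2,\omega_1)$ extracted from the proof of Theorem \ref{decompGD}; you simply make explicit the diagonal singular-vector argument that the paper leaves to the reader. One minor simplification: your final check that both components of $w_i$ are nonzero is not actually needed, since the cyclic submodule generated by any nonzero singular vector of weight $\widehat{\omega_1}$ inside $\widetilde{L}_{G_{2}}^{(1)}(-2,\omega_1)\oplus\widetilde{L}_{G_{2}}^{(2)}(-2,\omega_1)$ is irreducible and isomorphic to $L_{G_{2}}(-2,\omega_1)$ whether it is a graph or one of the two summands, and the directness in both claims already follows from the distinct highest weights.
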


\end{document}